\newtheorem{theorem}{Theorem}
\newtheorem{proposition}[theorem]{Proposition}
\newtheorem{lemma}[theorem]{Lemma}
\newtheorem{remark}[theorem]{Remark}
\begin{document}

\title[Pusz--Woronowicz's functional calculus]{Pusz--Woronowicz's functional calculus revisited}
\author[K.\ Hatano]{Kanae Hatano}
\author[Y.\ Ueda]{Yoshimichi UEDA$^1$}
\address{
Graduate School of Mathematics, Nagoya University, 
Furocho, Chikusaku, Nagoya, 464-8602, Japan
}
\email{
(KH) m19033f@math.nagoya-u.ac.jp; 
(YU) ueda@math.nagoya-u.ac.jp
}
\date{\today}
\thanks{$^1$Supported by Grant-in-Aid for Scientific Research (B) JP18H01122} 
\subjclass[2000]{Primary 47A60; Secondary 47A64}
\keywords{Functional calculus; Operator connection; Operator perspective; Convexity}
\maketitle
\begin{abstract}
This note is a complement to Pusz--Woronowicz's works on functional calculus for two positive forms from the viewpoint of operator theory. Based on an elementary, self-contained and purely Hilbert space operator explanation of their functional calculus, we show that any operator connection type operations (including any operator perspectives) are captured by their functional calculus.
\end{abstract}

\allowdisplaybreaks{

\section{Introduction}

It is well known that the notion of geometric mean for Hilbert space operators originates in Pusz--Woronowicz's work \cite{PuszWoronowicz:RMP75}. However, it seems that Pusz--Woronowicz's approach (that precisely means their construction and/or formulation) has not received much attention apart from some works in the context of mathematical physics based on operator algebras such as   \cite{Uhlmann:CMP77,Kosaki:CMP82,Kosaki:JOT86,Donald:CMP86,Yamagami:LMP08,Yamagami:LMP19}. In fact, almost all the papers discussing operator means seem to follow Ando's translation \cite[Theorem I.2]{Ando:LectNotes78} of Pusz--Woronowicz's geometric mean into Hilbert space operators, which is, strictly speaking, not the same as Pusz--Woronowicz's original construction. This circumstance motivated us to revisit Pusz--Woronowicz's original papers \cite{PuszWoronowicz:RMP75,PuszWoronowicz:LMP78} in the context of operator theory. 

We originally thought, by a remark in \cite{Ando:LectNotes78}, that Ando made a kind of direct calculation (rather than any characterization like \cite[Theorem I.2]{Ando:LectNotes78}) to translate Pusz--Woronowicz's geometric mean for positive forms into the language of Hilbert space operators. However, he did not explain anything about such a calculation. Hence, we tried to rediscover such a calculation. Although such an attempt is standard to learn new subjects, the calculation we observed shows that any operator perspective (see \cite{Effros:NASUSA09,EbadianNikoufarGordji:PNAS11,EffrosHansen:AFA14}) is a special case of Pusz--Woronowicz's functional calculus. This has unexpectedly been unnoticed so far.  

The main purpose of this note is to clarify the role and the merit of Pusz--Woronowicz's functional calculus in operator theory. Hence, we will also \emph{reconstruct their functional calculus in the framework of Hilbert space bounded operators} from their original works (see \S\S3.1). After our initial observation was made, we were aware of J.I.\ Fujii's note \cite{Fujii:MathJapon88} by thorough search. (See also his earlier related work \cite{Fujii:MathJapon80}, which we learned from him.) His discussion heavily depends upon the structure theorem \cite[Theorem 3.4]{KuboAndo:MathAnn80} for operator means. On the other hand, ours is straightforward, constructive and hence completely general beyond operator means. Namely, one of the consequences of this note says that \emph{any} operator connection type operations $(A,B)\mapsto A^{1/2}f(A^{-1/2}BA^{-1/2})A^{1/2}$ and $B^{1/2}g(B^{-1/2}AB^{-1/2})B^{1/2}$ with locally bounded Borel $f(t),g(t)$ are exactly Pusz--Woronowicz's functional calculus with functions $(r,s)\mapsto rf(s/r)$ and $(r,s)\mapsto sg(r/s)$, respectively. See Theorem \ref{T4}. 

An obvious merit of Pusz--Woronowicz's functional calculus is that its definition does not involve the procedure of taking the limit of $(A+\varepsilon I, B+\varepsilon I)\to(A,B)$ as $\varepsilon\searrow0$. Another merit is that the joint operator convexity question for their functional calculus can easily be investigated (see \S\S4.1). Moreover, their functional calculus seems to fit unbounded functions like $(r,s) \mapsto r\log(r/s)$. On the other hand, the formulation of Kubo--Ando's operator connections seems more manageable with respect to the order structure (thanks to many existing works). Thus, there are two kinds of representations of the same binary operation for Hilbert space operators, each of which has own merits.  

This note grew out from the first author's master thesis project under the second author's guidance. In her master thesis, all the materials were treated in the finite-dimensional setting by using the specific feature of finite-dimensionality. 

\section{Preliminaries} 

\subsection{Notations and terminologies} The inner product $(\,\cdot\,|\,\cdot\,)_\mathcal{H}$ of a Hilbert space $\mathcal{H}$; the spectrum $\sigma(T)$ of a Hilbert space operator $T$; all the bounded operators $B(\mathcal{H})$ on a Hilbert space $\mathcal{H}$; all the continuous functions $C(\Omega)$ on a locally compact Hausdorff space $\Omega$; $\mathscr{B}(\Omega)$ and $\mathscr{B}_\mathrm{b}(\Omega)$, $\mathscr{B}_\mathrm{locb}(\Omega)$ denote, respectively, all the Borel functions, all the bounded Borel functions and all the Borel functions that are bounded on any compact subsets, on a locally compact Hausdorff space $\Omega$. Remark that $C(\Omega) \subset \mathscr{B}_\mathrm{locb}(\Omega)$ and moreover, $C(\Omega) \subset \mathscr{B}_\mathrm{b}(\Omega) = \mathscr{B}_\mathrm{locb}(\Omega)$ when $\Omega$ is compact. Throughout this note, an operator is said to be \emph{invertible}, if it has a bounded inverse. 

\subsection{Functional calculus for two commuting selfadjoint operators}
For the reader's convenience, we briefly review the functional calculus for two commuting selfadjoint bounded operators $S, T$ on a Hilbert space $\mathcal{H}$, which seems to be less popular. 

Consider $N := S+iT$, a normal bounded operator on $\mathcal{H}$. By the spectral mapping theorem we have $\sigma(S) = \{\mathrm{Re}z\,;\,z\in \sigma(N)\}$ and $\sigma(T) = \{\mathrm{Im}z\,;\,z\in \sigma(N)\}$, and hence $\sigma(N) \subset \sigma(S)+i\sigma(T)$. Let $f \in \mathscr{B}_\mathrm{b}(\sigma(S)\times\sigma(T))$ be arbitrarily given. Then, $\hat{f}(z) := f(\mathrm{Re}z,\mathrm{Im}z)$ falls into $\mathscr{B}_\mathrm{b}(\sigma(N))$, and we define $\Phi_{(S,T)}(f) := \hat{f}(N)$, the functional calculus of $N$ with $\hat{f}$ (see \cite[\S4.5]{Pedersen:GTM}). By construction, $\Phi_{(S,T)} : \mathscr{B}_\mathrm{b}(\sigma(S)\times\sigma(T)) \to B(\mathcal{H})$ is obviously a unital $*$-homomorphism sending two coordinate functions $(s,t) \mapsto s,t$ to $S,T$, respectively, such that it possesses the following monotone convergence property: If $\{f_n\}$ is a bounded non-decreasing sequence in $\mathscr{B}_\mathrm{b}(\sigma(S)\times\sigma(T))$, then $f = \sup_n f_n \in \mathscr{B}_\mathrm{b}(\sigma(S)\times\sigma(T))$ and $\Phi_{(S,T)}(f_n) \nearrow \Phi_{(S,T)}(f)$ (in the strong operator topology) as $n\to\infty$. 

Let $\Omega$ be a compact subset of $\mathbb{R}^2$ that contains $\sigma(S)\times\sigma(T)$, and let $\Psi : \mathscr{B}_\mathrm{b}(\Omega) \to B(\mathcal{H})$ be another unital $*$-homomorphism with the same properties as $\Phi_{(S,T)}$ has. Here, $f$ is understood as its restriction to $\sigma(S)\times\sigma(T)$ when $\Phi_{(S,T)}(f)$ is considered. Observe that $\Psi(p) = \Phi_{(S,T)}(p)$ holds for any polynomial $p=p(s,t)$. Taking a uniform approximation by polynomials on $\Omega$, we see that $\Psi(f) = \Phi_{(S,T)}(f)$ holds for any $f \in C(\Omega)$. Let $\mathscr{B}$ be all the $f \in \mathscr{B}_\mathrm{b}(\Omega)$ with $\Psi(f) = \Phi_{(S,T)}(f)$. We have seen that $C(\Omega)$ sits in $\mathscr{B}$. Moreover, $\mathscr{B}$ is clearly closed under the monotone convergence. Therefore, the monotone class theorem ensures that $\mathscr{B} = \mathscr{B}_\mathrm{b}(\Omega)$; namely, $\Psi = \Phi_{(S,T)}$ holds. Consequently, we write $f(S,T) := \Phi_{(S,T)}(f)$ with $f \in \mathscr{B}_\mathrm{b}(\Omega)$ and call $f \mapsto f(S,T)$ \emph{the bounded Borel functional calculus of the commuting selfadjoint operators $S,T$}. By the above discussion, this functional calculus map clearly factors through $\mathscr{B}_\mathrm{b}(\sigma(S)\times\sigma(T))$. This is analogous to the single operator functional calculus. 

Remark that any $g \in \mathscr{B}_\mathrm{b}(\sigma(S))$ can be regarded as an element of $\mathscr{B}_\mathrm{b}(\sigma(S)\times\sigma(T))$ with $g(s,t):=g(s)$. Thus we can consider $g(S,T)$ for that function $g$. Trivially, the mapping $g \in \mathscr{B}_\mathrm{b}(\sigma(S)) \mapsto g(S,T) \in B(\mathcal{H})$ is a unital $*$-homomorphism sending the coordinate function $s$ to $S$ such that it satisfies the monotone convergence property. By the uniqueness of functional calculus we obtain that $g(S,T) = g(S)$ holds for any $g(s,t)=g(s)$ with $g \in \mathscr{B}_\mathrm{b}(\sigma(S))$. Similarly, we have $h(S,T) = h(T)$ holds for any $h(s,t)=h(t)$ with $h \in \mathscr{B}_\mathrm{b}(\sigma(T))$. 

It is more tractable to start with the joint spectral measure in the finite-dimensional setting. Namely, let $P_\lambda$ ($\lambda \in \sigma(S)$) and $Q_\mu$ ($\mu\in\sigma(T)$) be the spectral projections associated with $S$ and $T$, respectively. Then, the joint spectral projection $E_{(\lambda,\mu)}$ at $(\lambda,\mu)\in\sigma(S)\times\sigma(T)$ is simply given by $E_{(\lambda,\mu)}=P_\lambda Q_\mu = Q_\mu P_\lambda$, and the functional calculus is constructed by $f(S,T)=\sum_{(\lambda,\mu)\in\sigma(S)\times\sigma(T)} f(\lambda,\mu)\,E_{(\lambda,\mu)}$. 

\section{Pusz--Woronowicz's functional calculus for operators}

Let $A, B$ be positive bounded operators on a Hilbert space $\mathcal{H}$ \emph{throughout this section}.  

\subsection{Construction} 
Pusz--Woronowicz's functional calculus is defined for positive forms on complex vector spaces, and it can be  applied to Hilbert space operators (see at the end of this subsection). However, we prefer to reconstruct their functional calculus in terms of Hilbert space operators explicitly, because the original formulation is too abstract to discuss Hilbert space operators.  

\medskip
We first consider the orthogonal decomposition $\mathcal{H}=\mathcal{H}_0\oplus\mathcal{H}_1$ with $\mathcal{H}_0 := \mathrm{Ker}(A+B)$ and $\mathcal{H}_1:=\mathcal{H}_0^\perp$. Since $0 \leq A, B \leq A+B$, we can write 
\[
A = \begin{bmatrix} 0 & 0 \\ 0 & A_1 \end{bmatrix}, \quad 
B = \begin{bmatrix} 0 & 0 \\ 0 & B_1 \end{bmatrix}
\]
along the above orthogonal decomposition of $\mathcal{H}$. By construction, one has $\mathrm{Ker}(A_1+B_1)=\{0\}$. 

The mappings $(A_1+B_1)^{1/2}x \mapsto A_1^{1/2}x, (A_1+B_1)^{1/2}x \mapsto B_1^{1/2}x$ ($x \in \mathcal{H}_1$) uniquely extend to contractive operators $X, Y$ on $\mathcal{H}_1$ so that $X(A_1+B_1)^{1/2} = A_1^{1/2}, Y(A_1+B_1)^{1/2} =
B_1^{1/2}$ hold. We set $R:= |X|^2, S:= |Y|^2$. Here is a simple lemma, which is a key of Pusz--Woronowicz's functional calculus. 

\begin{lemma}\label{L1} {\rm(\cite[Theorem 1.1]{PuszWoronowicz:RMP75})} $R+S=I_{\mathcal{H}_1}$. \end{lemma}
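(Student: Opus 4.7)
The plan is to verify the stated identity first on a dense subspace and then extend it by continuity, exploiting the fact that $\mathrm{Ker}(A_1+B_1)=\{0\}$ on $\mathcal{H}_1$.

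First, I would take an arbitrary $x\in\mathcal{H}_1$ and set $y:=(A_1+B_1)^{1/2}x$. By the defining relations of $X$ and $Y$ one has $Xy=A_1^{1/2}x$ and $Yy=B_1^{1/2}x$, so
\[
(Ry\mid y)_{\mathcal{H}}+(Sy\mid y)_{\mathcal{H}}
=\|Xy\|^2+\|Yy\|^2
=(A_1 x\mid x)_{\mathcal{H}}+(B_1 x\mid x)_{\mathcal{H}}
=((A_1+B_1)x\mid x)_{\mathcal{H}}
=\|y\|^2.
\]
In other words, the bounded selfadjoint operator $T:=R+S-I_{\mathcal{H}_1}$ satisfies $(Ty\mid y)_{\mathcal{H}}=0$ for every $y$ in the range of $(A_1+B_1)^{1/2}$.

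Second, I would argue density: because $A_1+B_1\geq0$ has trivial kernel on $\mathcal{H}_1$, so does its positive square root $(A_1+B_1)^{1/2}$, and then by the standard relation $\overline{\mathrm{Ran}((A_1+B_1)^{1/2})}=\mathrm{Ker}((A_1+B_1)^{1/2})^\perp$ for bounded selfadjoint operators, the range $\mathrm{Ran}((A_1+B_1)^{1/2})$ is dense in $\mathcal{H}_1$. Therefore the quadratic form of $T$ vanishes on a dense subspace, and since $T$ is bounded and selfadjoint, the polarization identity (or equivalently, the continuity of $y\mapsto(Ty\mid y)_{\mathcal{H}}$) forces $T=0$, which gives $R+S=I_{\mathcal{H}_1}$.

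There is no real obstacle here; the only point that requires any care is the density of $\mathrm{Ran}((A_1+B_1)^{1/2})$ in $\mathcal{H}_1$, which is exactly the reason for restricting to $\mathcal{H}_1=\mathrm{Ker}(A+B)^\perp$ in the first place. Everything else is an immediate bookkeeping consequence of how $X$ and $Y$ are defined.
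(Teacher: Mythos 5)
Your proposal is correct and follows essentially the same route as the paper: both verify the identity for $R+S$ against vectors in the dense range of $(A_1+B_1)^{1/2}$ using $|X|^2=X^*X$, $|Y|^2=Y^*Y$ and the defining relations of $X,Y$, and then conclude by density. The only cosmetic difference is that you check the quadratic form and invoke polarization, whereas the paper checks the full sesquilinear form directly.
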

\begin{proof}
The desired assertion follows from $((R+S)(A_1+B_1)^{1/2}x\,|\,(A_1+B_1)^{1/2}y)_{\mathcal{H}_1} = (A_1^{1/2}x\,|A_1^{1/2}y)_{\mathcal{H}_1} + (B_1^{1/2}x\,|B_1^{1/2}y)_{\mathcal{H}_1} = ((A_1+B_1)^{1/2}x\,|\,(A_1+B_1)^{1/2}y)_{\mathcal{H}_1}$, since $(A_1+B_1)^{1/2}$ has dense range.
\end{proof}

A function $f(r,s)$ on $[0,\infty)^2$ (or $(0,\infty)^2$) is said to be \emph{homogeneous} if $f(\lambda r,\lambda s) = \lambda f(r,s)$ for all $\lambda,r,s \geq 0$ (resp.\ $\lambda,r,s>0$). 
We will construct an operator, say $f(A,B)$, for a homogeneous $f \in \mathscr{B}_\mathrm{locb}([0,\infty)^2)$, and call it \emph{Pusz--Woronowicz's functional calculus} for the pair $(A,B)$ with $f$ (see the final paragraph of this subsection for the reason). The local boundedness of $f$ is the requirement to guarantee that the resulting $f(A,B)$ becomes bounded. Remark that the class $\mathscr{B}_\mathrm{locb}([0,\infty)^2)$ is too restrictive for some applications; see the discussion in \S\S4.2. 

Since $R,S$ are commuting selfadjoint operators, we can perform the functional calculus $f(R,S)$; see \S2 ({\it n.b.}, $f(r,s)$ is bounded on $\sigma(R)\times\sigma(S)$). Then we set 
\begin{equation}\label{Eq1}
f(A,B) := \begin{bmatrix} 0 & (A_1+B_1)^{1/2} \end{bmatrix}^* f(R,S) \begin{bmatrix} 0 & (A_1+B_1)^{1/2} \end{bmatrix}
\end{equation}
along $\mathcal{H}=\mathcal{H}_0\oplus\mathcal{H}_1$, where the $1\times2$ operator matrix above is nothing but $(A+B)^{1/2}$ but we regard it as an operator from $\mathcal{H}$ to $\mathcal{H}_1$ with dense range. We should remark that formula \eqref{Eq1} has a more natural expression. Namely, we have
\begin{equation}\label{Eq2}
f(A,B) = \begin{bmatrix} 0 & 0 \\ 0 & (A_1+B_1)^{1/2}f(R,S)(A_1+B_1)^{1/2} \end{bmatrix}
\end{equation}
along $\mathcal{H} = \mathcal{H}_0\oplus\mathcal{H}_1$. Moreover, since $f(0,0) = 0$ by homogeneity, the following description of $f(A,B)$ is also justified: 
\begin{equation}\label{Eq3}
f(A,B) = (A+B)^{1/2} f(R,S) (A+B)^{1/2}, 
\end{equation} 
where we regard $R, S$ as operators on the whole $\mathcal{H}$ by letting $R=S=0$ on $\mathcal{H}_0$. We will use these \eqref{Eq1}-\eqref{Eq3} as the definition of $f(A,B)$.    

\medskip
When both $A,B$ are invertible (or more generally, $A+B$ is invertible), one easily observes that $\mathcal{H}_1=\mathcal{H}$ (hence $\mathcal{H}_0 = \{0\}$) and 
\begin{equation}\label{Eq4}
R = (A+B)^{-1/2}A(A+B)^{-1/2}, \quad S=(A+B)^{-1/2}B(A+B)^{-1/2},
\end{equation}
which are clearly invertible too. (This fact was already remarked in \cite[Example 3]{Uhlmann:CMP77} but not discussed seriously there.) Hence the construction of $f(A,B)$ so far works well for any homogeneous $f \in \mathscr{B}_\mathrm{locb}((0,\infty)^2)$ rather than $\mathscr{B}_\mathrm{locb}([0,\infty)^2)$, when both $A, B$ are invertible.

\medskip
Here we briefly explain the relationship between the above construction and the original one in \cite{PuszWoronowicz:RMP75}. For two positive forms $\alpha, \beta$ on a complex vector space and a homogeneous $f \in \mathscr{B}_\mathrm{locb}([0,\infty)^2)$, Pusz--Woronowicz's original functional calculus defines a new positive form $f(\alpha,\beta)$ on the same vector space. Consider two positive forms $\alpha(x,y):=(Ax\,|\,y)_\mathcal{H}, \beta(x,y):=(Bx\,|\,y)_\mathcal{H}$, $x,y \in \mathcal{H}$. The original construction starts with a compatible representation of $\alpha,\beta$. The pair $(R,S)$ constructed above together with the map $h : \mathcal{H} \to \mathcal{H}_1$ defined by $h(x):=(A+B)^{1/2}x$, $x\in\mathcal{H}$, is indeed a compatible representation; namely, $R+S=I_{\mathcal{H}_1}$ and
\[
(R h(x)\,|\,h(y))_{\mathcal{H}_1} = (Ax\,|\,y)_\mathcal{H}, \quad 
(S h(x)\,|\,h(y))_{\mathcal{H}_1} = (Bx\,|\,y)_\mathcal{H}, \qquad x,y \in \mathcal{H} 
\]
hold. Then, the desired positive form $f(\alpha,\beta)$ is defined to be $(x,y) \in \mathcal{H}\times\mathcal{H} \mapsto (f(R,S)h(x)\,|\,h(y))_{\mathcal{H}_1} \in \mathbb{C}$ based on the chosen compatible representation $(R,S,h)$. Obviously, $f(\alpha,\beta)(x,y) = (f(A,B)x\,|\,y)_\mathcal{H}$ holds for any $x,y \in \mathcal{H}$ in the case. Here, we point out that Pusz--Woronowicz proved, in \cite[Theorem 1.2]{PuszWoronowicz:RMP75}, that the resulting form $f(\alpha,\beta)$ does not depend on the choice of compatible representation, where the homogeneity of function plays a crucial role. We recommend the reader to consult Pusz--Woronowicz's original paper \cite{PuszWoronowicz:RMP75} that contains many fruitful ideas. 

\subsection{Properties}
We believe that the facts given in this subsection are new. 

\medskip
The next proposition justifies the notation of $f(A,B)$. Although Pusz--Woronowicz did not give it, we believe that they confirmed it before the release of their works. Remark that the construction of $f(A,B)$ so far works with function $f(r,s)$ defined only on the segment $r+s=1$ with $r,s \geq 0$. However, the next proposition needs that $f(r,s)$ is homogeneous and hence defined on the whole $[0,\infty)^2$ (or $(0,\infty)^2$ when both $A,B$ are invertible). 

\begin{proposition}\label{P2} If $A, B$ commute with each other, then the above $f(A,B)$ is just the usual functional calculus of $A,B$ {\rm(}see \S2{\rm)}. 
\end{proposition}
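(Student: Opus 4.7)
The plan is to reduce the statement to a direct verification within the joint bounded Borel functional calculus of the commuting pair $(A_1, B_1)$ on $\mathcal{H}_1$, where everything becomes a manipulation of scalar functions on $\sigma(A_1) \times \sigma(B_1)$.

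First, I would dispose of the $\mathcal{H}_0$-part. Since $A = B = 0$ on $\mathcal{H}_0$ and $f(0,0) = 0$ by homogeneity, both Pusz--Woronowicz's construction (via the block form \eqref{Eq2}) and the usual joint functional calculus vanish there, so it suffices to work on $\mathcal{H}_1$. On $\mathcal{H}_1$, let $E_1$ denote the joint spectral measure of $(A_1, B_1)$, supported in $\sigma(A_1)\times\sigma(B_1)\subset[0,\infty)^2$. The hypothesis $\mathrm{Ker}(A_1+B_1)=\{0\}$ is equivalent to $E_1(\{(0,0)\}) = 0$, so the Borel functions $\phi(r,s) = r^{1/2}/(r+s)^{1/2}$ and $\psi(r,s) = s^{1/2}/(r+s)^{1/2}$ are defined $E_1$-a.e.\ on $\sigma(A_1)\times\sigma(B_1)$ and are bounded by $1$ there.

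The crux is to identify $X = \phi(A_1, B_1)$ and $Y = \psi(A_1, B_1)$. The product rule in joint functional calculus turns the pointwise identity $\phi(r,s)(r+s)^{1/2} = r^{1/2}$ (valid $E_1$-a.e.) into $\phi(A_1,B_1)(A_1+B_1)^{1/2} = A_1^{1/2}$; since $\phi(A_1,B_1)$ is a contraction and $(A_1+B_1)^{1/2}$ has dense range, uniqueness of the contractive extension of $(A_1+B_1)^{1/2}x \mapsto A_1^{1/2}x$ forces $X = \phi(A_1, B_1)$, and likewise $Y = \psi(A_1, B_1)$. Consequently, $R$ and $S$ are the joint functional calculus of $(A_1, B_1)$ applied to $r/(r+s)$ and $s/(r+s)$, respectively. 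Once $R, S$ are so identified, I would invoke the composition rule to get $f(R, S) = \tilde{f}(A_1, B_1)$ with $\tilde{f}(r,s) := f(r/(r+s), s/(r+s))$; homogeneity of $f$ with $\lambda = 1/(r+s)$ then reduces $\tilde{f}(r,s)$ to $f(r,s)/(r+s)$ on the support of $E_1$. Sandwiching with $(A_1+B_1)^{1/2}$ on both sides and applying the product rule one more time gives $(A_1+B_1)^{1/2}f(R,S)(A_1+B_1)^{1/2} = f(r,s)(A_1,B_1) = f(A_1, B_1)$, which is exactly the usual joint functional calculus. The intermediate factor $f(r,s)/(r+s)$ may blow up at $(0,0)$, but this is harmless since $E_1$ does not see that point and $f$ is locally bounded on the compact set $\sigma(A_1)\times\sigma(B_1)$.

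The main obstacle is the identification of $X$ with $\phi(A_1, B_1)$ in the absence of a bounded inverse for $(A_1+B_1)^{1/2}$: one cannot simply write $X = A_1^{1/2}(A_1+B_1)^{-1/2}$ as a single operator identity. The cleanest resolution is the product-rule argument sketched above, combined with the uniqueness of the contractive extension from the dense range of $(A_1+B_1)^{1/2}$. As a fallback, one could approximate via the spectral cut-offs $E_1(\{r+s \geq 1/n\})$, on whose range $(A_1+B_1)^{1/2}$ is genuinely invertible, carry out the algebra exactly, and pass to the strong limit as $n\to\infty$ using the monotone convergence property of the joint functional calculus recalled in Section~2.
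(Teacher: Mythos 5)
Your argument is correct and follows essentially the same route as the paper's own proof: reduce to $\mathcal{H}_1$, identify $X$, $Y$ (hence $R$, $S$) as the joint functional calculus of $(A_1,B_1)$ applied to $\sqrt{r}/\sqrt{r+s}$, $\sqrt{s}/\sqrt{r+s}$ via uniqueness of the contractive extension on the dense range of $(A_1+B_1)^{1/2}$, then use composition and homogeneity to collapse $(A_1+B_1)^{1/2}f(R,S)(A_1+B_1)^{1/2}$ to $f(A_1,B_1)$. The only cosmetic difference is that you phrase the composition and product rules through the joint spectral measure $E_1$ and almost-everywhere identities, whereas the paper defines the relevant Borel functions everywhere (setting them to $0$ at the origin) and justifies each step by the uniqueness of the functional calculus via the monotone class theorem.
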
 
\begin{proof} We will give a proof when $f \in \mathscr{B}_\mathrm{locb}([0,\infty)^2)$. The case when $A,B$ are invertible and $f\in\mathscr{B}_\mathrm{locb}((0,\infty)^2)$ can be treated with the same method. 

Notice that $A_1, B_1$ commute with each other too. By formula \eqref{Eq2} and $f(0,0)=0$ due to homogeneity, it suffices to prove that $(A_1+B_1)^{1/2} f(R,S)(A_1+B_1)^{1/2}$ is exactly the functional calculus $f(A_1,B_1)$. 

We first remark that $g(A_1+B_1)=\hat{g}(A_1,B_1)$ with $\hat{g}(r,s) := g(r+s)$ holds for any $g \in \mathscr{B}_\mathrm{b}[0,m]$ with $m:=\Vert A_1+B_1\Vert$. Clearly, $g \in \mathscr{B}_\mathrm{b}[0,m] \mapsto \hat{g} \in \mathscr{B}_\mathrm{b}(\sigma(A)\times\sigma(B))$ is a unital $*$-homomorphism with the sequential pointwise convergence property, that is, $g_n \to g$ pointwise implies $\hat{g}_n \to \hat{g}$ pointwise. Thus, $g \in \mathscr{B}_\mathrm{b}[0,m] \mapsto \hat{g}(A_1,B_1) \in B(\mathcal{H}_1)$ defines a unital $*$-homomorphism with the monotone convergence property such that $g(t)=t$ implies $g(A_1+B_1)=A_1+B_1$. The uniqueness of functional calculus shows the desired remark. In particular, $(A_1+B_1)^{1/2}$ can also be understood as the functional calculus of pair $(A_1,B_1)$ with function $(r,s) \mapsto \sqrt{r+s}$.  

Denote by $(a,b)$ the natural coordinates in $[0,\infty)^2$. Consider the functions
\[
x(a,b) := 
\begin{cases} 
\sqrt{a}/\sqrt{a+b} & (a+b\neq 0), \\
0  & (a+b=0);  
\end{cases} \qquad 
y(a,b) := 
\begin{cases} 
\sqrt{b}/\sqrt{a+b} & (a+b\neq 0), \\
0  & (a+b=0), 
\end{cases}
\]
both of which define bounded Borel functions on $\sigma(A_1)\times\sigma(B_1)$. Since $x(a,b)\sqrt{a+b} = \sqrt{a}$ and $y(a,b)\sqrt{a+b} = \sqrt{b}$ hold for any $a,b\geq0$, we have $x(A_1,B_1)(A_1+B_1)^{1/2}=A_1^{1/2}$ and $y(A_1,B_1)(A_1+B_1)^{1/2}=B_1^{1/2}$. Consequently, $X = x(A_1,B_1)$ and $Y=y(A_1,B_1)$ hold by the uniqueness of $X, Y$. Then, $r(a,b) := x(a,b)^2$ and $s(a,b) := y(a,b)^2$ define bounded Borel functions on $\sigma(A_1)\times\sigma(B_1)$; hence $R = |X|^2 = r(A_1,B_1)$ and $S = |Y|^2 = s(A_1,B_1)$.   

For any $h \in \mathscr{B}_\mathrm{b}([0,1]^2)$, $\hat{h}(a,b) := h(r(a,b),s(a,b))$, $(a,b) \in \sigma(A_1)\times\sigma(B_1)$, defines an element of $\mathscr{B}_\mathrm{b}(\sigma(A_1)\times\sigma(B_1))$, and $h \in \mathscr{B}_\mathrm{b}([0,1]^2) \mapsto \hat{h} \in \mathscr{B}_\mathrm{b}(\sigma(A_1)\times\sigma(B_1))$ is a unital $*$-homomorphism with the sequential pointwise convergence property. Thus, we have a unital $*$-homomorphism $h \in \mathscr{B}_\mathrm{b}([0,1]^2) \mapsto \hat{h}(A_1,B_1) \in B(\mathcal{H}_1)$ with the monotone convergence property. If $h(r,s)=r$ in $(r,s) \in [0,1]^2$, then $\hat{h}(A_1,B_1) = r(A_1,B_1) = R$. Similarly, if $h(r,s) = s$, then $\hat{h}(A_1,B_1) = S$. By the uniqueness of functional calculus (see \S2), $\hat{h}(A_1,B_1) = h(R,S)$ holds for every $h \in \mathscr{B}_\mathrm{b}([0,1]^2)$.

Let $f \in \mathscr{B}_\mathrm{locb}([0,\infty)^2)$ be homogeneous. We will understand $f$ as an element of $\mathscr{B}_\mathrm{b}([0,1]^2)$ as well as one of $\mathscr{B}_b(\sigma(A)\times\sigma(B))$. Observe, by homogeneity, that 
\begin{align*} 
\sqrt{a+b}\,\hat{f}(a,b)\,\sqrt{a+b} 
&= 
(a+b)f(a/(a+b),b/(a+b)) 
= f(a,b)
\end{align*}
if $a+b>0$. Moreover, the same identity trivially holds even when $a+b=0$. Consequently, 
\[
(A_1+B_1)^{1/2}f(R,S)(A_1+B_1)^{1/2} = (A_1+B_1)^{1/2}\hat{f}(A_1,B_1)(A_1+B_1)^{1/2}  = f(A_1,B_1) 
\]
as desired.  
\end{proof} 

Using joint spectral measures one can give a more intuitive proof to the above fact in the finite-dimensional setting. 

\medskip
We will then express $f(A,B)$ like operator connections in the sense of Kubo--Ando \cite{KuboAndo:MathAnn80} in a constructive way. We start with the next simple (but key) lemma. Although the lemma is quite easy to prove, we do give its proof. 

\begin{lemma}\label{L3} Let $S$ be an invertible bounded operator on a Hilbert space $\mathcal{K}$ and $T$ be a self-adjoint bounded operator on $\mathcal{K}$. If $S^{-1}TS$ is still selfadjoint, then $g(S^{-1}TS) = S^{-1}g(T)S$ holds for any $g \in \mathscr{B}_\mathrm{locb}(\mathbb{R})$. 
\end{lemma}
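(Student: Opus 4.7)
The plan is to establish the identity successively on polynomials, on continuous functions, on bounded Borel functions, and finally on locally bounded Borel functions, by iterated approximation and a monotone class argument. The key preliminary observation is that $\lambda I - S^{-1}TS = S^{-1}(\lambda I - T)S$, so $\sigma(S^{-1}TS) = \sigma(T)$; both functional calculi are therefore anchored on the same compact set, and throughout I may view $g$ as an element of $\mathscr{B}_\mathrm{b}(\sigma(T))$.

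The polynomial case follows by telescoping: $(S^{-1}TS)^n = S^{-1}T^n S$, so $p(S^{-1}TS) = S^{-1} p(T) S$ for every polynomial $p$. For continuous $g$ on $\sigma(T)$, Stone--Weierstrass produces polynomials $p_n \to g$ uniformly on $\sigma(T)$. Norm continuity of the continuous functional calculi of $T$ and $S^{-1}TS$, together with the boundedness of $S$ and $S^{-1}$, gives $p_n(T) \to g(T)$ and $p_n(S^{-1}TS) \to g(S^{-1}TS)$ in norm, and also $S^{-1} p_n(T) S \to S^{-1} g(T) S$ in norm. Passing to the limit in $p_n(S^{-1}TS) = S^{-1} p_n(T) S$ extends the identity to all $g \in C(\sigma(T))$.

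The main subtlety is the extension to bounded Borel $g$, and this is where I expect to have to be careful. The usual uniqueness-of-functional-calculus argument does not apply directly, because the assignment $\Psi : g \mapsto S^{-1} g(T) S$ is \emph{not} a $*$-homomorphism in general: it is linear and multiplicative, but does not respect adjoints unless $S$ happens to be unitary. To sidestep this I imitate the monotone class argument used in \S\S2.2 of the paper, applied to the collection
\[
\mathscr{B} := \{\, g \in \mathscr{B}_\mathrm{b}(\sigma(T)) : g(S^{-1}TS) = S^{-1} g(T) S \,\}.
\]
By the previous paragraph, $C(\sigma(T)) \subset \mathscr{B}$. The collection $\mathscr{B}$ is closed under bounded monotone convergence: if $g_n \nearrow g$ with $\{g_n\} \subset \mathscr{B}$ uniformly bounded, then the monotone convergence property of the functional calculus of $T$ gives $g_n(T) z \to g(T) z$ for every $z \in \mathcal{K}$, and applying the bounded operator $S^{-1}$ to $g_n(T) S x$ yields $S^{-1} g_n(T) S x \to S^{-1} g(T) S x$; simultaneously $g_n(S^{-1}TS) \to g(S^{-1}TS)$ strongly. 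Passing to the strong operator limit in the equality $g_n(S^{-1}TS) = S^{-1} g_n(T) S$ places $g$ in $\mathscr{B}$. The monotone class theorem therefore yields $\mathscr{B} = \mathscr{B}_\mathrm{b}(\sigma(T))$.

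Finally, for $g \in \mathscr{B}_\mathrm{locb}(\mathbb{R})$, compactness of $\sigma(T) = \sigma(S^{-1}TS)$ makes the restriction $g|_{\sigma(T)}$ a bounded Borel function, and both $g(T)$ and $g(S^{-1}TS)$ depend only on this restriction through the factoring property of the single-operator bounded Borel functional calculus. The bounded Borel case just established therefore concludes the proof.
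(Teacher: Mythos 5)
Your proposal is correct and follows essentially the same route as the paper's own proof: polynomials by telescoping, continuous functions by uniform polynomial approximation, and bounded Borel functions via the monotone class theorem applied to the set of functions satisfying the identity, with the locally bounded case reduced to the bounded case by compactness of the spectrum. Your explicit remarks that $\sigma(S^{-1}TS)=\sigma(T)$ and that $g\mapsto S^{-1}g(T)S$ fails to be a $*$-homomorphism (which is precisely why the monotone class theorem, rather than uniqueness of functional calculus, is invoked) are welcome clarifications but do not change the argument.
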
 
\begin{proof}
For any polynomial $p(x)$ we observe that $p(S^{-1}TS) = S^{-1}p(T)S$. For a continuous function $g(x)$ on $[-m,m]$ with a sufficiently large $m > 0$, we take a uniform approximation to $f(x)$ by polynomials on $[-m,m]$ and obtain $g(S^{-1}TS) = S^{-1}g(T)S$.  

Let $\mathscr{B}$ be all the $h \in \mathscr{B}_\mathrm{b}[-m,m]$ such that $h(S^{-1}TS) = S^{-1}h(T)S$. By means of bounded Borel functional calculus one can easily see that $\mathscr{B}$ is closed under the monotone convergence. Moreover, the above observation shows that $\mathscr{B}$ contains all the continuous functions. Hence $\mathscr{B}$ is exactly $\mathscr{B}_\mathrm{b}[-m,m]$ by the monotone class theorem, and the desired assertion follows.   
\end{proof}

Here is the main observation. In what follows, the limit $(\varepsilon_1,\varepsilon_2) \searrow (0,0)$ should be understood as $\varepsilon_1 + \varepsilon_2 \searrow 0$. Also, $f(T,1), f(1,T)$ denote the functional calculus of $T$ with functions $t\mapsto f(t,1), f(1,t)$, respectively. 

\begin{theorem}\label{T4} 
The following hold true{\rm:} 
\begin{itemize}
\item[(1)] For any homogeneous $f \in \mathscr{B}_\mathrm{locb}([0,\infty)^2)$, 
\[
f(A,B) = 
\begin{cases} 
A^{1/2} f(1,A^{-1/2}B A^{-1/2})A^{1/2} &\text{if $A$ is invertible}, \\
B^{1/2} f(B^{-1/2}A B^{-1/2},1)B^{1/2} &\text{if $B$ is invertible}. 
\end{cases}
\]
When both $A,B$ are invertible,  
\[
f(A,B) = A^{1/2}f(1,A^{-1/2}B A^{-1/2})A^{1/2} = B^{1/2} f(B^{-1/2}A B^{-1/2},1)B^{1/2} 
\]
holds for any homogeneous $f \in \mathscr{B}_\mathrm{locb}((0,\infty)^2)$. 
\item[(2)]
If $0$ is an isolated point in $\sigma(A+B)$, then for any homogeneous $f \in C([0,\infty)^2)$, 
\begin{align*}
f(A,B)
= 
\lim_{(\varepsilon_1,\varepsilon_2)\searrow(0,0)}f(A_{\varepsilon_1},B_{\varepsilon_2}) 
\end{align*}
in the operator norm topology, where $A_{\varepsilon_1} := A+\varepsilon_1 I_\mathcal{H}, B_{\varepsilon_2} := B+\varepsilon_2 I_\mathcal{H}$. In particular, this always holds for any pair of positive operators on a finite dimensional Hilbert space. 
\end{itemize}
\end{theorem}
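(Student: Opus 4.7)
The plan for Part (1) is to exploit formula \eqref{Eq3} together with the relation $R+S=I_{\mathcal{H}_1}$ from Lemma~\ref{L1}. Assume $A$ is invertible. Then $A+B\ge A\ge\delta I$ for some $\delta>0$, so $A+B$ is invertible, $\mathcal{H}_0=\{0\}$, and \eqref{Eq4} expresses $R$ and $S$ as congruences. In particular $R$ is bounded below, so the joint spectrum of $(R,S)$ lies on the segment $\{(r,1-r):r\in\sigma(R)\}$ with $r$ bounded away from $0$. The pointwise homogeneity identity $f(r,s)=r\,f(1,s/r)$ then holds everywhere on this joint spectrum, and since $R$ and $R^{-1}S$ commute (and the latter is self-adjoint), lifting this identity to joint functional calculus yields
\[
f(R,S)=R\cdot f(1,R^{-1}S).
\]
Substituting into \eqref{Eq3} and using $(A+B)^{1/2}R=A(A+B)^{-1/2}$ gives
\[
f(A,B)=A(A+B)^{-1/2}\,f(1,R^{-1}S)\,(A+B)^{1/2}.
\]

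The decisive step is a conjugation identity. Set $W:=(A+B)^{1/2}A^{-1/2}$, an invertible operator, and $T:=A^{-1/2}BA^{-1/2}$, a self-adjoint operator. A short direct calculation from \eqref{Eq4} shows $R^{-1}S=WTW^{-1}$, so $WTW^{-1}$ is self-adjoint and Lemma~\ref{L3} (applied to $t\mapsto f(1,t)$, suitably extended to $\mathbb{R}$) gives $f(1,R^{-1}S)=W\,f(1,T)\,W^{-1}$. Substituting this and letting the $(A+B)^{\pm 1/2}$ factors collapse leaves $f(A,B)=A^{1/2}f(1,A^{-1/2}BA^{-1/2})A^{1/2}$. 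The $B$-invertible case is obtained by swapping the roles of $A$ and $B$ and using $f(r,s)=s\,f(r/s,1)$; when both are invertible, both identities hold (for any homogeneous $f\in\mathscr{B}_\mathrm{locb}((0,\infty)^2)$) and produce the same operator.

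For Part (2), write $\varepsilon:=\varepsilon_1+\varepsilon_2$ and decompose $\mathcal{H}=\mathcal{H}_0\oplus\mathcal{H}_1$; since $A,B$ (and hence $A_{\varepsilon_1},B_{\varepsilon_2}$) preserve this splitting, so do $R_\varepsilon,S_\varepsilon$ (constructed from the invertible pair via \eqref{Eq4}) and $f(A_{\varepsilon_1},B_{\varepsilon_2})$ from \eqref{Eq3}. On $\mathcal{H}_0$, $A_{\varepsilon_1}|_{\mathcal{H}_0}=\varepsilon_1 I$ and $B_{\varepsilon_2}|_{\mathcal{H}_0}=\varepsilon_2 I$ commute, so Proposition~\ref{P2} identifies the restriction as $f(\varepsilon_1,\varepsilon_2)I_{\mathcal{H}_0}$, which tends to $0=f(A,B)|_{\mathcal{H}_0}$ by continuity of $f$. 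On $\mathcal{H}_1$, the hypothesis that $0$ is isolated in $\sigma(A+B)$ gives $A_1+B_1\ge\delta I_{\mathcal{H}_1}$ for some $\delta>0$, so $t\mapsto(t+\varepsilon)^{\pm 1/2}$ converges uniformly on $\sigma(A_1+B_1)$ as $\varepsilon\searrow 0$. Thus $R_\varepsilon|_{\mathcal{H}_1}\to R$, $S_\varepsilon|_{\mathcal{H}_1}\to S$, and $(A_{\varepsilon_1}+B_{\varepsilon_2})^{1/2}|_{\mathcal{H}_1}\to(A_1+B_1)^{1/2}$ in operator norm; a standard polynomial approximation to the continuous $f$ on the relevant uniformly bounded joint spectra yields $f(R_\varepsilon,S_\varepsilon)|_{\mathcal{H}_1}\to f(R,S)|_{\mathcal{H}_1}$ in norm, and sandwiching with the norm-convergent outer factors gives the claim on $\mathcal{H}_1$. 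The finite-dimensional statement follows because $\sigma(A+B)$ is then finite.

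I expect the main technical obstacle to be the justification of $f(R,S)=R\cdot f(1,R^{-1}S)$ in Part~(1) for merely locally bounded Borel $f$: one must verify that both sides are well-defined bounded operators obtained by the commuting Borel functional calculus of \S2, and that the pointwise homogeneity identity truly propagates because the joint spectrum of $(R,S)$ avoids the $r=0$ axis once $A$ is invertible. The rest is algebraic simplification together with Lemma~\ref{L3}.
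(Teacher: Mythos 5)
Your proposal is correct and follows essentially the same route as the paper's proof: in (1) the homogeneity identity $f(r,s)=rf(1,s/r)$ is lifted to $f(R,S)=Rf(1,R^{-1}S)$ (using that $\sigma(R)$ is bounded away from $0$ when $A$ is invertible), the conjugation $R^{-1}S=WTW^{-1}$ with $W=(A+B)^{1/2}A^{-1/2}$ is handled by Lemma~\ref{L3}, and the outer factors collapse; in (2) the same block decomposition along $\mathcal{H}=\mathcal{H}_0\oplus\mathcal{H}_1$ and norm convergence of $(A_1+B_1+\varepsilon I)^{-1/2}$ plus polynomial approximation of $f$ are used. The only differences are cosmetic (order of substitution into \eqref{Eq3}, and identifying the $\mathcal{H}_0$-block via Proposition~\ref{P2} rather than by direct computation).
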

\begin{proof} 
Item (1): 
Since $f(r,s)$ is homogeneous, we have $f(r,s)=rf(1,r^{-1}s)=f(rs^{-1},1)s$ for all $r,s>0$, and hence $f(R,S) = Rf(1,R^{-1}S)=f(RS^{-1},1)S$, where $f(1,R^{-1}S), f(RS^{-1},1)$ can also be understood as the functional calculus of $R^{-1}S$, $RS^{-1}$ with functions $t \mapsto f(1,t), f(t,1)$, respectively, thanks to the uniqueness of functional calculus as in the proof of Proposition \ref{P2}.  

Assume that $A$ is invertible. Observe that
\[
R^{-1}S = (A^{1/2}(A+B)^{-1/2})^{-1}(A^{-1/2}BA^{-1/2})(A^{1/2}(A+B)^{-1/2})
\] 
is selfadjoint, since $R,S$ commute with each other. 
It follows by Lemma \ref{L3} that
\begin{align*}
&f(R,S) = Rf(1,R^{-1}S) \\
&= 
(A+B)^{-1/2}A(A+B)^{-1/2} (A^{1/2}(A+B)^{-1/2})^{-1} f(1,A^{-1/2}BA^{-1/2})(A^{-1/2}(A+B)^{1/2}) \\
&=
(A+B)^{-1/2}A^{1/2}f(1,A^{-1/2}BA^{-1/2})A^{1/2}(A+B)^{-1/2}. 
\end{align*}
Similarly, if $B$ is invertible, then $RS^{-1} = (B^{-1/2}(A+B)^{1/2})^{-1}(B^{-1/2}AB^{-1/2})(B^{-1/2}(A+B)^{1/2})$ and hence
\[
f(R,S) = f(RS^{-1},1)S = (A+B)^{-1/2}B^{1/2}f(B^{-1/2}AB^{-1/2},1)B^{1/2}(A+B)^{-1/2}.
\]
Since $f(A,B) = (A+B)^{1/2}f(R,S)(A+B)^{1/2}$, the first assertion follows. The second assertion is now trivial. 

\medskip
Item (2): Observe that 
\[
A_{\varepsilon_1} = \begin{bmatrix} \varepsilon_1 I_{\mathcal{H}_0} & 0 \\ 0 & (A_1)_{\varepsilon_1} \end{bmatrix}, \quad 
B_{\varepsilon_2} = \begin{bmatrix} \varepsilon_2 I_{\mathcal{H}_0} & 0 \\ 0 & (B_1)_{\varepsilon_2} \end{bmatrix}, \quad 
A_{\varepsilon_1}+B_{\varepsilon_2} = \begin{bmatrix} (\varepsilon_1+\varepsilon_2) I_{\mathcal{H}_0} & 0 \\ 0 & (A_1+ B_1)_{\varepsilon_1+\varepsilon_2} \end{bmatrix}
\]
along $\mathcal{H} = \mathcal{H}_0\oplus\mathcal{H}_1$, where $(A_1)_{\varepsilon_1} := A_1 + {\varepsilon_1} I_{\mathcal{H}_1}, (B_1)_{\varepsilon_2} := B_1 + {\varepsilon_2} I_{\mathcal{H}_1}$ and $(A_1+ B_1)_{\varepsilon_1+\varepsilon_2} = A_1+B_1+(\varepsilon_1+\varepsilon_2) I_{\mathcal{H}_1} = (A_1)_{\varepsilon_1}+(B_1)_{\varepsilon_2}$.  We denote by $R_{\varepsilon_1,\varepsilon_2}, S_{\varepsilon_1,\varepsilon_2}$ the $R$ and $S$-operators associated with the pair $(A_\varepsilon,B_\varepsilon)$. Then, we have 
\begin{align*}
R_{\varepsilon_1,\varepsilon_2} &= 
\begin{bmatrix} \frac{\varepsilon_1}{\varepsilon_1+\varepsilon_2}I_{\mathcal{H}_0} & 0 \\ 0 & ((A_1)_{\varepsilon_1}+(B_1)_{\varepsilon_2})^{-1/2}(A_1)_\varepsilon ((A_1)_{\varepsilon_1}+(B_1)_{\varepsilon_2})^{-1/2} \end{bmatrix}, \\ 
S_{\varepsilon_1,\varepsilon_2} &= 
\begin{bmatrix} \frac{\varepsilon_2}{\varepsilon_1+\varepsilon_2}I_{\mathcal{H}_0} & 0 \\ 0 & ((A_1)_{\varepsilon_1}+(B_1)_{\varepsilon_2})^{-1/2}(B_1)_{\varepsilon_1}((A_1)_{\varepsilon_1}+(B_1)_{\varepsilon_2})^{-1/2} \end{bmatrix}, 
\end{align*} 
and hence
\[
f(R_{\varepsilon_1,\varepsilon_2},S_{\varepsilon_1,\varepsilon_2}) 
= 
\begin{bmatrix} 
f(\frac{\varepsilon_1}{\varepsilon_1+\varepsilon_2},\frac{\varepsilon_2}{\varepsilon_1+\varepsilon_2})I_{\mathcal{H}_0} & 0 \\ 0 & T(\varepsilon_1,\varepsilon_2) \end{bmatrix}, 
\]
where $T(\varepsilon_1,\varepsilon_2) := f(R(\varepsilon_1,\varepsilon_2),S(\varepsilon_1,\varepsilon_2))$ and 
\begin{align*}
R(\varepsilon_1,\varepsilon_2) :&= 
((A_1)_{\varepsilon_1}+(B_1)_{\varepsilon_2})^{-1/2}(A_1)_{\varepsilon_1}((A_1)_{\varepsilon_1}+(B_1)_{\varepsilon_2})^{-1/2} \\
S(\varepsilon_1,\varepsilon_2) :&=
((A_1)_{\varepsilon_1}+(B_1)_{\varepsilon_2})^{-1/2}(B_1)_{\varepsilon_1}((A_1)_{\varepsilon_1}+(B_1)_{\varepsilon_2})^{-1/2} 
\end{align*}
acting on $\mathcal{H}_1$. 

By assumption, $A_1 + B_1$ is invertible. Taking a uniform approximation to $1/\sqrt{t}$ by polynomials in $t$ on $\sigma(A_1+B_1)$ ($\subset (0,\infty)$) we see that 
\[
((A_1)_{\varepsilon_1}+(B_1)_{\varepsilon_2})^{-1/2} = (A_1+B_1+(\varepsilon_1+\varepsilon_2) I_{\mathcal{H}_1})^{-1/2} \to (A_1 + B_1)^{-1/2}
\]
in the operator norm topology as $(\varepsilon_1,\varepsilon_2)\searrow(0,0)$. Thus, $R(\varepsilon_1,\varepsilon_2) \to R$ and $S(\varepsilon_1,\varepsilon_2) \to S$ in the operator norm topology as $(\varepsilon_1,\varepsilon_2)\searrow(0,0)$. The continuity of $f(r,s)$ enables us to take its uniform approximation by polynomials in $r,s$ on $[0,1]^2$, and we see that $T(\varepsilon_1,\varepsilon_2) \to f(R,S)$ in the operator norm topology as $(\varepsilon_1,\varepsilon_2)\searrow(0,0)$. Consequently, 
\begin{align*} 
f(A_{\varepsilon_1},B_{\varepsilon_2}) 
&=
\begin{bmatrix} 
f(\varepsilon_1,\varepsilon_2)I_{\mathcal{H}_0} & 0 \\ 0 &  (A_1+B_1)^{1/2}T(\varepsilon_1,\varepsilon_2) (A_1+B_1)^{1/2} \end{bmatrix} \\
&\longrightarrow  
\begin{bmatrix} 
0 & 0 \\ 0 &  (A_1+B_1)^{1/2}f(R,S) (A_1+B_1)^{1/2} \end{bmatrix}
= f(A,B)  
 \end{align*}
in the operator norm topology as $(\varepsilon_1,\varepsilon_2)\searrow(0,0)$. 
\end{proof}

Item (1) of the above theorem explains that Pusz--Woronowicz's functional calculus gives the weighted mean $A\,\sharp_\alpha\,B = f(A,B)$ with $f(r,s)=r^{1-\alpha}s^\alpha$, $0<\alpha<1$, when $A, B$ are invertible. Also, if $f(r,s) = rs/(r+s)$ with convention $0/0 := 0$, then $f \in C([0,\infty)^2)$ and by definition, 
\begin{align*}
f(A,B) 
&= 
(A+B)^{1/2}\Big((A+B)^{-1/2} A (A+B)^{-1} B (A+B)^{-1/2}\Big)(A+B)^{1/2} \\
&= 
A(A+B)^{-1}B = (A^{-1}+B^{-1})^{-1}
\end{align*}
for any pair $A, B$ of invertible operators. (This calculation suggests that Pusz--Woronowicz's idea is similar to (but not exactly same as) Fillmore--Williams's approach to the parallel sum of Hilbert space operators \cite[\S4]{FillmoreWilliams:AdvMath71}.) Item (2) of the theorem guarantees that the $f(A,B)$ with those functions $f(r,s)$ completely agree with the corresponding operator means at least in the finite-dimensional setting. Note that those functions $f(r,s)$ were already examined by Pusz--Woronowicz \cite{PuszWoronowicz:RMP75,PuszWoronowicz:LMP78} without appealing the notion of operator means (in fact, the notion did not exist at the time).  

We will establish that Theorem \ref{T4}(2) holds in general with the strong operator convergence in place of the norm convergence. The next lemma is a key. The proof below is motivated from Fillmore--Williams's paper \cite{FillmoreWilliams:AdvMath71}.  

\begin{lemma}\label{L5} Letting  
\begin{align*}
R(\varepsilon_1,\varepsilon_2) &:= (A_1+B_1+(\varepsilon_1+\varepsilon_2) I_{\mathcal{H}_1})^{-1/2}(A_1+\varepsilon_1 I_{\mathcal{H}_1})(A_1+B_1+(\varepsilon_1+\varepsilon_2) I_{\mathcal{H}_1})^{-1/2}, \\
S(\varepsilon_1,\varepsilon_2) &:= (A_1+B_1+(\varepsilon_1+\varepsilon_2) I_{\mathcal{H}_1})^{-1/2}(B_1+\varepsilon_2 I_{\mathcal{H}_1})(A_1+B_1+(\varepsilon_1+\varepsilon_2) I_{\mathcal{H}_1})^{-1/2}  
\end{align*}
we have $ R(\varepsilon_1,\varepsilon_2) \to R$ and $S(\varepsilon_1,\varepsilon_2) \to S$ in the strong operator topology as $(\varepsilon_1,\varepsilon_2)\searrow(0,0)$. 
\end{lemma}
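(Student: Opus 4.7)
The plan is to reduce everything to a strong-convergence statement about commuting functions of the single operator $T:=A_1+B_1$ on $\mathcal{H}_1$, crucially exploiting the fact (built into the construction of $\mathcal{H}_1$) that $\mathrm{Ker}(T)=\{0\}$. Setting $\delta:=\varepsilon_1+\varepsilon_2$ and $T_\delta:=T+\delta I_{\mathcal{H}_1}$, a direct algebraic split gives
\[
R(\varepsilon_1,\varepsilon_2) \;=\; T_\delta^{-1/2} A_1 T_\delta^{-1/2} + \varepsilon_1\, T_\delta^{-1},
\]
and analogously $S(\varepsilon_1,\varepsilon_2) = T_\delta^{-1/2} B_1 T_\delta^{-1/2} + \varepsilon_2\, T_\delta^{-1}$. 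It suffices to treat $R(\varepsilon_1,\varepsilon_2)$, since $S(\varepsilon_1,\varepsilon_2)$ is handled identically.

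The first step is to dispatch the error term $\varepsilon_1\, T_\delta^{-1}$ by the bounded Borel functional calculus of $T$: its spectral multiplier on $\sigma(T)\subset[0,\infty)$ is $\lambda\mapsto\varepsilon_1/(\lambda+\delta)$, which is dominated by $1$ and tends pointwise to $0$ on $(0,\infty)$ as $\delta\searrow 0$. Since $\mathrm{Ker}(T)=\{0\}$, the spectral measure of $T$ has no mass at $\{0\}$, so the dominated convergence theorem applied to $\int\bigl(\varepsilon_1/(\lambda+\delta)\bigr)^{2}\,d\|E_T(\lambda)x\|^{2}$ gives $\varepsilon_1\, T_\delta^{-1}\to 0$ strongly. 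For the main term, I use the identity $A_1 = T^{1/2} R\, T^{1/2}$, which is immediate from $X T^{1/2}=A_1^{1/2}$ and $R=X^*X$ (and the symmetry $T^{1/2}X^{*}=A_1^{1/2}$ obtained by taking adjoints). Introducing $C_\delta := T^{1/2} T_\delta^{-1/2}$ — a self-adjoint contraction on $\mathcal{H}_1$ commuting with $R$, $T$, $T_\delta$ — I rewrite
\[
T_\delta^{-1/2} A_1 T_\delta^{-1/2} \;=\; C_\delta\, R\, C_\delta.
\]
The multiplier of $C_\delta$ is $\sqrt{\lambda}/\sqrt{\lambda+\delta}$, so the same dominated-convergence argument yields $C_\delta\to I_{\mathcal{H}_1}$ strongly. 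The routine two-step bound
\[
\|C_\delta R C_\delta x - Rx\| \;\le\; \|R(C_\delta x - x)\| + \|(C_\delta - I_{\mathcal{H}_1}) Rx\|
\]
then gives $C_\delta R C_\delta \to R$ strongly, and combining with the first step finishes the proof.

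The main (and essentially only) delicate point is that the error term does \emph{not} vanish in operator norm: when $0\in\sigma(T)$ one has $\|\varepsilon_1\, T_\delta^{-1}\|=\varepsilon_1/\delta$, which lies in $[0,1]$ but need not tend to $0$. This is why norm convergence of $R(\varepsilon_1,\varepsilon_2)$ fails in general and why the injectivity of $T=A_1+B_1$ on $\mathcal{H}_1$ is genuinely needed in the strong-operator argument — exactly the mechanism underlying Fillmore--Williams's treatment of the parallel sum that motivates the proof.
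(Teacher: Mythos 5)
Your proof is correct, and it takes a genuinely different route from the paper's. The paper factors $R(\varepsilon_1,\varepsilon_2)$ multiplicatively as
\[
\bigl(T_\delta^{-1/2}(A_1+\varepsilon_1 I)^{1/2}\bigr)\bigl((A_1+\varepsilon_1 I)^{1/2}T_\delta^{-1/2}\bigr),
\]
reduces to the dense range of $(A_1+B_1)^{1/2}$ by uniform contractivity, and then invokes the Douglas decomposition theorem to identify $X^*$ with the (unbounded) composition $(A_1+B_1)^{-1/2}A_1^{1/2}$ and to secure the domain inclusion $\mathrm{ran}(A_1)\subset\mathrm{dom}((A_1+B_1)^{-1/2})$ needed for its dominated-convergence estimate. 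You instead split additively into $T_\delta^{-1/2}A_1T_\delta^{-1/2}+\varepsilon_1 T_\delta^{-1}$, and your identity $A_1=T^{1/2}RT^{1/2}$ (which encodes the same information as the paper's Douglas step, but in purely bounded form) lets you sandwich the bounded operator $R$ between the contractions $C_\delta=T^{1/2}T_\delta^{-1/2}\to I$ strongly. This keeps the entire argument inside the bounded Borel functional calculus of $T=A_1+B_1$ and avoids unbounded operators and the Douglas theorem altogether; both arguments ultimately rest on the same two facts, namely $\mathrm{Ker}(A_1+B_1)=\{0\}$ and dominated convergence in the spectral integral, and your closing remark about the failure of norm convergence when $0\in\sigma(T)$ is consistent with the paper's Theorem \ref{T4}(2).

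One small inaccuracy: your parenthetical claim that $C_\delta$ commutes with $R$ is false in general, since $R$ commutes with $T=A_1+B_1$ only when $A_1$ and $B_1$ commute. Fortunately this is never used: the rewriting $T_\delta^{-1/2}A_1T_\delta^{-1/2}=C_\delta RC_\delta$ needs only that $T^{1/2}$ and $T_\delta^{-1/2}$ commute with each other together with $A_1=T^{1/2}RT^{1/2}$, and your two-step estimate
\[
\Vert C_\delta RC_\delta x-Rx\Vert\leq\Vert R(C_\delta x-x)\Vert+\Vert(C_\delta-I_{\mathcal{H}_1})Rx\Vert
\]
follows from the decomposition $C_\delta R(C_\delta x-x)+(C_\delta-I_{\mathcal{H}_1})Rx$ using only that $C_\delta$ and $R$ are contractions. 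You should simply delete the commutation claim for $R$.
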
 
\begin{proof} 
We write $I = I_{\mathcal{H}_1}$ for simplicity. We divide $R(\varepsilon_1,\varepsilon_2)$ into 
\[
\big((A_1+B_1+(\varepsilon_1+\varepsilon_2) I)^{-1/2}(A_1+\varepsilon_1 I)^{1/2}\big)\big((A_1+\varepsilon_1 I)^{1/2}(A_1+B_1+(\varepsilon_1+\varepsilon_2) I)^{-1/2}\big), 
\]
both of which are contractive by construction. Thus, it suffices to prove the convergence against a vector $y:=(A_1+B_1)^{1/2}x$ with arbitrary $x \in \mathcal{H}_1$. 

Using the spectral decomposition $A_1+B_1 = \int_0^\infty \lambda\,E(d\lambda)$ together with $\mathrm{Ker}(A_1+B_1)=\{0\}$ we have 
\[
(A_1+B_1+(\varepsilon_1+\varepsilon_2) I)^{-1/2}y = (A_1+B_1+(\varepsilon_1+\varepsilon_2) I)^{-1/2}(A_1+B_1)^{1/2}x \to x
\]
as $(\varepsilon_1,\varepsilon_2)\searrow(0,0)$, and hence 
\[
(A_1+\varepsilon_1 I)^{1/2}(A_1+B_1+(\varepsilon_1+\varepsilon_2) I)^{-1/2}y
\to A_1^{1/2}x
\]
as $(\varepsilon_1,\varepsilon_2)\searrow(0,0)$, since $(A_1+\varepsilon_1 I)^{1/2} \to A_1^{1/2}$ in the operator norm topology as $\varepsilon_1 \searrow 0$ by a uniform approximation to $\sqrt{t}$ by polynomials in $t$. 

By the famous Douglas decomposition theorem \cite{Douglas:PAMS66} (or \cite[Theorem 1.2]{FillmoreWilliams:AdvMath71}) and its proof, $X^*$ is given by $(A_1+B_1)^{-1/2}A_1^{1/2}$ (as a composition).  This means that the range $\mathrm{ran}(A_1^{1/2})$ sits in the domain $\mathrm{dom}((A_1+B_1)^{-1/2})$. With this observation, we have
\begin{align*}
&\Vert(A_1+B_1+(\varepsilon_1+\varepsilon_2) I)^{-1/2}(A_1+\varepsilon_1 I)^{1/2}A_1^{1/2}x - X^* A_1^{1/2} x\Vert_{\mathcal{H}_1}  \\
&\leq 
\Vert \big((A_1+B_1+(\varepsilon_1+\varepsilon_2) I)^{-1/2} A_1^{1/2}\big)\big((A_1+\varepsilon_1 I)^{1/2}x-A_1^{1/2}x\big)\Vert_{\mathcal{H}_1} \\
&\qquad\qquad+ 
\Vert(A_1+B_1 + (\varepsilon_1+\varepsilon_2) I)^{-1/2} A_1 x - (A_1+B_1)^{-1/2} A_1 x\Vert_{\mathcal{H}_1} \\
&\leq 
\Vert (A_1+\varepsilon_1 I)^{1/2}x-A_1^{1/2}x\Vert_{\mathcal{H}_1} + 
\Vert((A_1+B_1 + (\varepsilon_1+\varepsilon_2) I)^{-1/2} - (A_1+B_1)^{-1/2}) A_1 x\Vert_{\mathcal{H}_1}
\end{align*}
since $A_1 \leq A_1+B_1+(\varepsilon_1+\varepsilon_2) I$ implies $\Vert (A_1+B_1+(\varepsilon_1+\varepsilon_2) I)^{-1/2} A_1^{1/2}\Vert \leq 1$ by the Douglas decomposition theorem again. The first term on the last line converges to $0$ as remarked before. The second term on the last line also converges to $0$ as follows. 

Since $\mathrm{ran}(A_1) \subset \mathrm{ran}(A_1^{1/2}) \subset \mathrm{dom}((A_1+B_1)^{-1/2})$, we observe that $\int_0^\infty \frac{1}{\lambda} \Vert E(d\lambda) A_1x\Vert_{\mathcal{H}_1}^2 < +\infty$. Thus, the dominated convergence theorem shows that 
\[
\Vert((A_1+B_1 + (\varepsilon_1+\varepsilon_2) I)^{-1/2} - (A_1+B_1)^{-1/2})A_1 x\Vert_{\mathcal{H}_1}^2 = 
\int_0^\infty \Big|\frac{1}{\sqrt{\lambda + (\varepsilon_1+\varepsilon_2)}} - \frac{1}{\sqrt{\lambda}}\Big|^2\,\Vert E(d\lambda) A_1x\Vert_{\mathcal{H}_1}^2
\]
converges to $0$ as $(\varepsilon_1,\varepsilon_2)\searrow(0,0)$, since $\mathrm{Ker}(A_1+B_1)=\{0\}$ and $(A_1+B_1)^{-1/2} = \int_0^\infty \lambda^{-1/2}\,E(d\lambda)$. 

Consequently, we obtain that 
\[
(A_1+B_1+(\varepsilon_1+\varepsilon_2) I)^{-1/2}(A_1+\varepsilon_1 I)^{1/2}A_1^{1/2}x \to X^* A_1^{1/2} x = X^* X (A_1+B_1)^{1/2} x = Ry 
\]
as $(\varepsilon_1,\varepsilon_2)\searrow(0,0)$. Summing up the discussion so far, we have $R(\varepsilon_1,\varepsilon_2)\to R$ as $(\varepsilon_1,\varepsilon_2)\searrow0$. Then,  $S(\varepsilon_1,\varepsilon_2) = I-R(\varepsilon_1,\varepsilon_2) \to I-R = S$ in the strong operator topology as $(\varepsilon_1,\varepsilon_2)\searrow(0,0)$ too. 
\end{proof} 

The above lemma enables us to prove the following theorem: 

\begin{theorem}\label{T6} For any homogeneous $f \in C([0,\infty)^2)$, we have 
\begin{align*}
f(A,B) 
= \lim_{(\varepsilon_1,\varepsilon_2)\searrow(0,0)} f(A_{\varepsilon_1},B_{\varepsilon_2}) 
\end{align*}
in the strong operator topology, where $A_{\varepsilon_1} := A+\varepsilon_1 I_\mathcal{H}, B_{\varepsilon_2} := B+\varepsilon_2 I_\mathcal{H} $. 
\end{theorem}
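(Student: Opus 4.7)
The plan is to mimic the proof of Theorem \ref{T4}(2), but replace the global norm convergence of $R(\varepsilon_1,\varepsilon_2)$ and $S(\varepsilon_1,\varepsilon_2)$ with the strong operator convergence delivered by Lemma \ref{L5}, then compensate by using that $f(R(\varepsilon_1,\varepsilon_2),S(\varepsilon_1,\varepsilon_2))$ and $f(R,S)$ are uniformly controlled on the compact square $[0,1]^2$. So I decompose everything along $\mathcal{H}=\mathcal{H}_0\oplus\mathcal{H}_1$ exactly as in the proof of Theorem \ref{T4}(2): the resulting block-matrix description of $f(A_{\varepsilon_1},B_{\varepsilon_2})$ has $f(\varepsilon_1,\varepsilon_2)I_{\mathcal{H}_0}$ in the $(0,0)$-corner and $(A_1+B_1+(\varepsilon_1+\varepsilon_2)I_{\mathcal{H}_1})^{1/2}\,f(R(\varepsilon_1,\varepsilon_2),S(\varepsilon_1,\varepsilon_2))\,(A_1+B_1+(\varepsilon_1+\varepsilon_2)I_{\mathcal{H}_1})^{1/2}$ in the $(1,1)$-corner, and the limit to be identified with $f(A,B)$ has $0$ in the $(0,0)$-corner and $(A_1+B_1)^{1/2}f(R,S)(A_1+B_1)^{1/2}$ in the $(1,1)$-corner by \eqref{Eq2}.

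The $(0,0)$-corner is handled by homogeneity and continuity: writing $f(\varepsilon_1,\varepsilon_2)=(\varepsilon_1+\varepsilon_2)\,f\!\left(\tfrac{\varepsilon_1}{\varepsilon_1+\varepsilon_2},\tfrac{\varepsilon_2}{\varepsilon_1+\varepsilon_2}\right)$, the continuous function $f$ is bounded on the compact segment $\{(u,v):u,v\geq0, u+v=1\}$, so $f(\varepsilon_1,\varepsilon_2)I_{\mathcal{H}_0}\to 0$ in norm as $(\varepsilon_1,\varepsilon_2)\searrow(0,0)$. For the outer factors in the $(1,1)$-corner, the elementary estimate $|\sqrt{t+\varepsilon}-\sqrt{t}|\leq\sqrt{\varepsilon}$ for $t\geq 0$ gives $(A_1+B_1+(\varepsilon_1+\varepsilon_2)I_{\mathcal{H}_1})^{1/2}\to(A_1+B_1)^{1/2}$ in operator norm, so one can pass these factors through a strong-operator limit without loss.

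The heart of the argument, and the main obstacle, is the strong-operator convergence $f(R(\varepsilon_1,\varepsilon_2),S(\varepsilon_1,\varepsilon_2))\to f(R,S)$. Here I would use that by construction $R(\varepsilon_1,\varepsilon_2)+S(\varepsilon_1,\varepsilon_2)=I_{\mathcal{H}_1}$ (Lemma \ref{L1} applied to $(A_{\varepsilon_1},B_{\varepsilon_2})$), and both operators are positive contractions; likewise for $R,S$. Thus the joint spectra all lie in the compact square $[0,1]^2$, and the Stone--Weierstrass theorem provides, for any $\eta>0$, a polynomial $p(r,s)$ with $\sup_{[0,1]^2}|f-p|<\eta$. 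By the bounded Borel functional calculus this yields $\Vert f(R(\varepsilon_1,\varepsilon_2),S(\varepsilon_1,\varepsilon_2))-p(R(\varepsilon_1,\varepsilon_2),S(\varepsilon_1,\varepsilon_2))\Vert\leq\eta$ and similarly for $(R,S)$, uniformly in $(\varepsilon_1,\varepsilon_2)$. Since Lemma \ref{L5} gives $R(\varepsilon_1,\varepsilon_2)\to R$ and $S(\varepsilon_1,\varepsilon_2)\to S$ strongly and these operators are uniformly bounded, ordinary products and sums preserve strong convergence, whence $p(R(\varepsilon_1,\varepsilon_2),S(\varepsilon_1,\varepsilon_2))\to p(R,S)$ strongly. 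A standard three-term $\varepsilon$-$\eta$ argument now yields $f(R(\varepsilon_1,\varepsilon_2),S(\varepsilon_1,\varepsilon_2))\xi\to f(R,S)\xi$ for every $\xi\in\mathcal{H}_1$. Combining this with the norm convergence of the outer square-root factors and boundedness of the middle factors (again by $\sup_{[0,1]^2}|f|$) gives the required strong convergence of the $(1,1)$-corners, completing the proof.
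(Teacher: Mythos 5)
Your proof is correct and follows essentially the same route as the paper: reduce to the block decomposition from the proof of Theorem \ref{T4}(2), handle the $\mathcal{H}_0$-corner by homogeneity, and establish $f(R(\varepsilon_1,\varepsilon_2),S(\varepsilon_1,\varepsilon_2))\to f(R,S)$ strongly via Lemma \ref{L5} combined with a uniform polynomial approximation of $f$ on $[0,1]^2$ and a three-term estimate. You have simply written out in full the details that the paper leaves to the reader (including correctly keeping the outer factors as $(A_1+B_1+(\varepsilon_1+\varepsilon_2)I_{\mathcal{H}_1})^{1/2}$ before passing to the norm limit).
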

\begin{proof} 
We keep the notations in Lemma \ref{L5} and its proof. As in the proof of Theorem \ref{T4}(2) it suffices to prove that $f(R(\varepsilon_1,\varepsilon_2),S(\varepsilon_1,\varepsilon_2)) \to f(R,S)$ in the strong operator topology  as $(\varepsilon_1,\varepsilon_2)\searrow(0,0)$. This indeed follows from Lemma \ref{L5} with a uniform approximation to $f(r,s)$ by polynomials in $r,s$ on $[0,1]^2$.    
\end{proof} 

Remark that the above theorem shows that Pusz--Woronowicz's functional calculus $f(A,B)$ with $f(r,s) = r^{1-\alpha}s^\alpha$ and $f(r,s) = rs/(r+s)$ completely agree with the weighted mean $A\,\sharp_\alpha\,B$ and the parallel sum $A:B$, respectively. Moreover, any operator connection $\sigma : B(\mathcal{H})^+\times B(\mathcal{H})^+ \to B(\mathcal{H})^+$ is captured in terms of Pusz--Woronowicz's functional calculus $f(A,B)$ with $f(r,s) I_\mathcal{H} := (r I_\mathcal{H})\sigma(s I_\mathcal{H})$. This last fact was essentially obtained in \cite{Fujii:MathJapon88} utilizing \cite[Theorem 3.4]{KuboAndo:MathAnn80}; in fact, the approach to operator means there is nothing less than an adaptation of Pusz--Woronowicz's functional calculus to Kubo--Ando's operator means from the viewpoint of this note. 

\medskip
Here is a technical remark, which is quite a natural formula. (More general formulas involving `sections' appeared in the proof of the main theorem of Pusz--Woronowicz's second paper \cite{PuszWoronowicz:LMP78}.)  

\begin{remark}\label{R8} For any homogeneous $f \in \mathscr{B}_\mathrm{locb}([0,\infty)^2)$ and $a, b > 0$ we define $g(r,s) := f(a^{-1}r,b^{-1}s)$, which falls into $\mathscr{B}_\mathrm{locb}([0,\infty^2)$ and is homogeneous again. Then, $f(A,B) = g(aA,bB)$ holds.  
\end{remark}

When $f$ is in $C([0,\infty)^2)$, one can easily confirm this remark by using Theorem \ref{T6}. Then one applies the trick utilizing the monotone class theorem like \cite[Theorem 1.2]{PuszWoronowicz:RMP75} (or Lemma \ref{L3} in this note). Here one remarks that there is a one-to-one correspondence $f(r,s) \leftrightarrow \psi(t)$ between all the homogeneous functions in $\mathscr{B}_\mathrm{locb}([0,\infty)^2)$ and $\mathscr{B}_\mathrm{b}([0,1])$ by $\psi(t) = f(t,1-t)$ and $f(r,s) = (r+s)\psi(r/(r+s))$ with $f(0,0) = 0$. Thus one should use the monotone class theorem on $\mathscr{B}_\mathrm{b}([0,1])$ rather than $\mathscr{B}_\mathrm{locb}([0,\infty)^2)$. The correspondence will explicitly be used in our explanation on Pusz--Woronowicz's Wigner--Yanase--Dyson--Lieb (WYDL in short) type theorem below.  

\section{Relations to recent studies} 

\subsection{Operator perspective}

We will clarify that the joint operator convexity of operator perspectives could be thought of as a simple consequence from Pusz--Woronowicz's work \cite{PuszWoronowicz:LMP78} if one already knew Theorem \ref{T4}. Utilizing their work has been missing in the development of operator means and perspectives. Pusz--Woronowicz's method dealing with the joint operator convexity is quite interesting. But, it seems that only a few specialists on operator means and perspectives realize it well. So, the purpose here is to explain it in terms of Hilbert space bounded operators.  

\medskip
By the construction of the pair $(R,S)$ from a given one $(A,B)$ together with the regularity of usual functional calculus $(R,S) \mapsto f(R,S)$, it is rather easy to see that Pusz--Woronowicz's functional calculus $f(A,B)$ is an example of regular map in the sense of \cite[Definition 2.1]{EffrosHansen:AFA14}. Here, a regular map means a mapping $(A,B) \mapsto F(A,B)$ from a convex subset of pairs of positive operators on a Hilbert space $\mathcal{H}$ to $B(\mathcal{H})$ that are compatible with unitary equivalence and direct sum like usual functional calculus; namely, $F(UAU^*,UBU^*) = UF(A,B)U^*$ for any unitary operator $U$, and if  $A = A_1\oplus A_2, B=B_1\oplus B_2$ on a direct sum of Hilbert spaces $\mathcal{H} = \mathcal{H}_1\oplus\mathcal{H}_2$., then $F(A,B) = F(A_1,B_1)\oplus F(A_2,B_2)$ holds along $\mathcal{H} = \mathcal{H}_1\oplus\mathcal{H}_2$. 

The next proposition is almost trivial by the proof of Theorem \ref{T4}(2). It is exactly item (iii) in \cite[Theorem 2.2]{EffrosHansen:AFA14}. We give its statement in a bit wider setup than before for a wider applicability.  

\begin{proposition}\label{P8} If $A$ is invertible and if $B_n \to B$ in the strong operator topology {\rm(}or operator norm{\rm)} as $n\to\infty$, then $f(A,B_n) \to f(A,B)$ in the strong operator topology {\rm(}resp. operator norm{\rm)} as $n\to\infty$ for any homogeneous $f \in \mathscr{B}([0,\infty)^2)$ that is continuous except $r=0$. {\rm(}Strictly speaking, $f(A,B)$ is not defined for such a function $f$ in \S3.1, but the construction clearly works well for the pairs considered here.{\rm)} The same holds when the roles of $A,B$ are interchanged. 
\end{proposition}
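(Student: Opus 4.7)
The plan is to reduce the claim to the well-known continuity of the single-variable continuous functional calculus by invoking the formula supplied by Theorem~\ref{T4}(1). Since $A$ is invertible, that theorem rewrites
\[
f(A,B_n) = A^{1/2}\,g(T_n)\,A^{1/2}, \qquad f(A,B) = A^{1/2}\,g(T)\,A^{1/2},
\]
where $g(t):=f(1,t)$, $T_n := A^{-1/2}B_n A^{-1/2}$, and $T := A^{-1/2}B A^{-1/2}$. The hypothesis that $f$ is continuous off $\{r=0\}$ ensures that $g$ is continuous on $[0,\infty)$, so $g(T_n)$ and $g(T)$ make sense as ordinary continuous functional calculi of bounded positive operators.

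The next step is to transfer the convergence hypothesis from $B_n$ to $T_n$. If $B_n\to B$ in operator norm, then boundedness of $A^{-1/2}$ gives $T_n\to T$ in norm. If $B_n\to B$ in the strong operator topology, then the identity $T_n x = A^{-1/2}B_n(A^{-1/2}x)$ shows $T_n\to T$ in SOT, while the Banach--Steinhaus theorem provides the uniform norm bound $\sup_n \|T_n\| \leq \|A^{-1/2}\|^2 \sup_n \|B_n\| < \infty$.

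The core observation is the standard fact that the continuous functional calculus is jointly continuous on norm-bounded sets: for selfadjoint $T_n,T$ whose spectra lie in a common compact interval $K$, convergence $T_n\to T$ in norm (respectively in SOT, given a uniform norm bound) implies $g(T_n)\to g(T)$ in the same topology for every $g\in C(K)$. This is immediate for polynomials by induction on the degree, using the uniform norm bound in the SOT case to handle products, and it extends to continuous $g$ by uniform polynomial approximation on $K$. Sandwiching the resulting convergence by the bounded operator $A^{1/2}$ yields $f(A,B_n)\to f(A,B)$ in the desired topology.

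The companion statement with the roles of $A$ and $B$ interchanged is handled symmetrically by appealing to the second formula $f(A,B) = B^{1/2}\,f(B^{-1/2}A B^{-1/2},1)\,B^{1/2}$ of Theorem~\ref{T4}(1), with the continuity hypothesis on $f$ correspondingly read as ``continuous except $s=0$.'' There is no real obstacle in this argument; the only point to verify with care is that the one-variable slice $t\mapsto f(1,t)$ is genuinely continuous on the whole of $[0,\infty)$, which is automatic from the form of the exceptional set allowed for $f$ and the fact that the ``invertible'' argument sits at $r=1>0$.
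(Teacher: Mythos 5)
Your argument is correct, but it takes a genuinely different route from the paper's. The paper stays inside the $(R,S)$-framework of the definition: from $A\geq\delta I$ it gets $\sigma(A+B_n)\subset[\delta,\gamma]$ for all $n$ (which already presupposes the uniform bound $\sup_n\Vert B_n\Vert<\infty$ that you extract from Banach--Steinhaus), deduces $(A+B_n)^{-1/2}\to(A+B)^{-1/2}$ by uniform polynomial approximation of $x^{-1/2}$ on $[\delta,\gamma]$, records the uniform spectral gap $R_n=(A+B_n)^{-1/2}A(A+B_n)^{-1/2}\geq(\delta/\gamma)I$, and then reruns the two-variable polynomial approximation argument of Theorem \ref{T6} on a compact set avoiding $r=0$. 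You instead invoke the operator-connection formula of Theorem \ref{T4}(1) and reduce everything to the continuity of the single-variable continuous functional calculus on norm-bounded sets, which is cleaner and avoids the two-variable calculus altogether. The one point you should make explicit is that Theorem \ref{T4}(1) is stated for homogeneous $f\in\mathscr{B}_{\mathrm{locb}}([0,\infty)^2)$, whereas the $f$ of this proposition may be unbounded near $r=0$; the identity $f(A,B)=A^{1/2}f(1,A^{-1/2}BA^{-1/2})A^{1/2}$ still holds because $R$ is invertible when $A$ is, so $f$ restricted to $\sigma(R)\times\sigma(S)$ is bounded and continuous and the proof of Theorem \ref{T4}(1) goes through verbatim --- but this deserves a sentence, parallel to the parenthetical caveat in the statement of the proposition itself. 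With that remark added your proof is complete; the paper's version has the mild advantage of exhibiting exactly where the hypothesis \emph{continuous except $r=0$} enters, namely as the uniform lower bound on the operators $R_n$.
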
 
\begin{proof}
Since $A$ is invertible, we have $A \geq \delta I$ for some $\delta > 0$, and hence $A + B_n \geq \delta I$ for all $n$. Therefore, all the $\sigma(A+B_n)$ and $\sigma(A)$ sit in $[\delta,\gamma]$ for some $\gamma > \delta$. Hence, taking a uniform approximation to $x^{-1/2}$ by polynomials over $[\delta,\gamma]$ we have $(A+B_n)^{-1/2} \to (A+B)^{-1/2}$ in the strong operator topology as $n\to\infty$. Moreover, $(A+B_n)^{-1/2} A (A+B_n)^{-1/2} \geq \delta(A+B_n)^{-1} \geq (\delta/\gamma)I > 0$. These explain that the proof of Theorem \ref{T6} does work for showing this proposition.  
\end{proof}

Pusz--Woronowicz's functional calculus $f(A,B)$ is homogeneous, i.e., $f(\lambda A,\lambda B)=\lambda f(A,B)$ for all $\lambda\geq0$, and hence satisfies items (i) in \cite[Theorem 2.2]{EffrosHansen:AFA14}. In fact, this identity trivially holds as $0=0$ when $\lambda=0$. When $\lambda>0$, one can easily check that the $R$ and the $S$-operators are not changed under $(A,B) \mapsto (\lambda A,\lambda B)$, and this fact immediately implies that the desired identity holds even in this case. A stronger homogeneity will be given later; see Remark \ref{R10}. 

\medskip
We then examine when Pusz--Woronowicz's functional calculus $f(A,B)$ is convex as a function in two variables. A complete solution to the question of convexity was given by Pusz--Woronowicz \cite{PuszWoronowicz:LMP78} in their formalism. We will give its translation into the Hilbert space operator formalism with a proof in a restricted setup. Here, we point out that Pusz--Woronowicz dealt with more general functions $f(r,s)$ possibly taking $+\infty$, \emph{but such a perfect treatment in the level of operators needs the use of unbounded operators}. On the other hand, we believe that the essence of their idea `faithfully' appears in the explanation below. 

\begin{theorem}\label{T9} {\rm(Pusz--Woronowicz's WYDL type theorem \cite{PuszWoronowicz:LMP78})} For a homogeneous and real-valued $f \in \mathscr{B}_\mathrm{locb}([0,\infty)^2)$, the following are equivalent{\rm:}
\begin{itemize}
\item[(i)] $f(V^*AV,V^*BV) \leq V^* f(A,B)V$ holds for any pair $A,B$ of positive bounded operators on a Hilbert space $\mathcal{H}$ and any bounded linear map $V$ from another Hilbert space $\mathcal{K}$ to $\mathcal{H}$. 
\item[(ii)] $t \in [0,1] \mapsto f(t,1-t) \in \mathbb{R}$ is operator convex. 
\item[(ii')] $t \in [0,1] \mapsto f(1-t,t) \in \mathbb{R}$ is operator convex. 
\end{itemize}
Moreover, if $f(r,s)$ is in $C([0,\infty)^2)$, then the following conditions are also equivalent to item {\rm(i)}{\rm:} 
\begin{itemize}
\item[(iii)] $t \in [0,\infty) \mapsto f(t,1) \in \mathbb{R}$ is operator convex. 
\item[(iii')] $t  \in [0,\infty) \mapsto f(1,t) \in \mathbb{R}$ is operator convex. 
\end{itemize}
\end{theorem}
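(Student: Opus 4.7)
I parametrize via $\psi(t):=f(t,1-t)\in\mathscr{B}_\mathrm{b}([0,1])$, which bijects with homogeneous $f$ by the Remark preceding this subsection. Since $S=I_{\mathcal{H}_1}-R$ on $\mathcal{H}_1$, formula \eqref{Eq3} simplifies to $f(A,B)=(A+B)^{1/2}\psi(R)(A+B)^{1/2}$, and item (ii) says exactly that $\psi$ is operator convex on $[0,1]$. The heart of the proof is therefore (i)$\Leftrightarrow$(ii); the remaining equivalences reduce to closely related isometry constructions combined with Hansen's operator Jensen inequality.

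For (ii)$\Rightarrow$(i), given bounded $V:\mathcal{K}\to\mathcal{H}$, I set $M:=A+B$, $M':=V^*MV$ and write $\mathcal{H}=\mathcal{H}_0\oplus\mathcal{H}_1$, $\mathcal{K}=\mathcal{K}_0'\oplus\mathcal{K}_1'$ for the associated decompositions of \S3.1. I define $W:\mathcal{K}_1'\to\mathcal{H}_1$ on the dense subspace $(M_1')^{1/2}\mathcal{K}_1'$ by $W(M_1')^{1/2}\xi := M^{1/2}V\xi$; the identity $\|M^{1/2}V\xi\|^2=(M'\xi,\xi)=\|(M_1')^{1/2}\xi\|^2$ shows that $W$ is isometric there and extends uniquely to an isometry (one checks $M^{1/2}V\xi\in\mathcal{H}_1$ using that $V(\mathcal{K}_0')\subset\mathcal{H}_0$). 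A short inner-product computation based on the defining factorizations $XM^{1/2}=A^{1/2}$ (extended by $0$ on $\mathcal{H}_0$) and its primed counterpart yields $W^*RW=R'$ on $\mathcal{K}_1'$, where $R,R'$ are the Pusz--Woronowicz $R$-operators of $(A,B)$ and $(V^*AV,V^*BV)$. Hansen's operator Jensen inequality then gives $\psi(R')\leq W^*\psi(R)W$; sandwiching between $(M_1')^{1/2}$, together with the adjoint identity $(M_1')^{1/2}W^*=V^*M^{1/2}\big|_{\mathcal{H}_1}$, produces $f(V^*AV,V^*BV)\leq V^*f(A,B)V$ on $\mathcal{K}_1'$. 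Both sides vanish on $\mathcal{K}_0'$, completing the inequality on all of $\mathcal{K}$.

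For (i)$\Rightarrow$(ii), given self-adjoint $R$ on $\mathcal{H}$ with $\sigma(R)\subset[0,1]$, I take $A:=R$, $B:=I_\mathcal{H}-R$ so that $f(A,B)=\psi(R)$; for an isometry $V$, (i) becomes $\psi(V^*RV)\leq V^*\psi(R)V$, and the standard $2\times 2$ block trick ($R:=T_1\oplus T_2$, $V\xi:=(\sqrt{\lambda}\xi,\sqrt{1-\lambda}\xi)^T$) recovers the defining operator convexity inequality for $\psi$. The equivalence with (ii') is automatic from $\psi(1-t)=f(1-t,t)$ and invariance of operator convexity under $t\mapsto 1-t$. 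For continuous $f$, (i)$\Rightarrow$(iii) follows by specializing $A:=T\geq 0$, $B:=I_\mathcal{H}$: Theorem \ref{T4}(1) yields $f(T,I_\mathcal{H})=\phi(T)$, and the isometry-plus-block-trick argument gives operator convexity of $\phi$. For (iii)$\Rightarrow$(ii), given $T_1,T_2$ with $\sigma\subset[0,1-\delta]$ and $\lambda\in[0,1]$, I set $T:=T_1\oplus T_2$, $V\xi:=(\sqrt{\lambda}\xi,\sqrt{1-\lambda}\xi)^T$, and construct the analogous isometry $W:=(I-T)^{1/2}V(I-V^*TV)^{-1/2}$ (well-defined since $I-T$ and $I-V^*TV$ are invertible under the $\delta$-assumption). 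A direct computation gives $W^*SW=S'$ for $S:=(I-T)^{-1/2}T(I-T)^{-1/2}$ and the primed analog; Hansen's inequality applied to operator convex $\phi$, followed by sandwiching with $(I-V^*TV)^{1/2}$ and the perspective identity $\psi(t)=(1-t)\phi(t/(1-t))$, yields $\psi(\lambda T_1+(1-\lambda)T_2)\leq\lambda\psi(T_1)+(1-\lambda)\psi(T_2)$. Letting $\delta\searrow 0$ and using continuity of $\psi$ (from that of $f$) establishes (ii). Finally (iii)$\Leftrightarrow$(iii') follows from the homogeneity identity $\phi(t)=t\phi'(1/t)$ ($t>0$) and the classical transpose symmetry of operator convexity on $(0,\infty)$.

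The main obstacle I expect is the careful verification that $W$ is a genuine isometry in (ii)$\Rightarrow$(i) when neither $A+B$ nor $V^*(A+B)V$ is invertible, necessitating passage to the reduced spaces $\mathcal{H}_1,\mathcal{K}_1'$ and Douglas-decomposition-style reasoning as in Lemma \ref{L5}. A secondary concern is that Hansen's inequality is classically stated for continuous functions, so the Borel case needed in (ii) must be handled either by invoking the automatic continuity of real-valued operator convex functions on the interior of their domain, or by a monotone-class extension in the spirit of \S2.
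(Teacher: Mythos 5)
Your proof is correct and follows essentially the same route as the paper: the isometry intertwining the two $R$-operators together with the Hansen--Pedersen isometry characterization of operator convexity for (i)$\Leftrightarrow$(ii), and the $\delta$-truncated perspective trick for (iii)$\Rightarrow$(ii). The only cosmetic differences are that the paper packages your explicit isometry $(I-T)^{1/2}V(I-V^*TV)^{-1/2}$ into an auxiliary homogeneous function $g_\delta(r,s)=((r+s)/c_\delta)\,f(c_\delta r/(r+s),1)$ so as to reuse the already-proved implication (ii)$\Rightarrow$(i), and it obtains (iii') by running the symmetric argument rather than by invoking transpose symmetry.
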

\begin{proof} We first recall an equivalent definition of operator convexity. A real-valued function $h(t)$ on an interval $I$ is \emph{operator convex} if $h(W^* X W) \leq W^* h(X)W$ holds for any bounded self-adjoint operator $X$ on a Hilbert space $\mathcal{L}_1$ with $\sigma(X) \subseteq I$ and any isometry $W$ from another Hilbert space $\mathcal{L}_2$ to $\mathcal{L}_1$. See e.g., \cite[Theorem 2.5.7(ii)]{Hiai:LectNotes10}. 

\medskip
(i) $\Rightarrow$ (ii), (ii'): Trivial, because $f(T,I_\mathcal{H}-T)$ (or $f(I_\mathcal{H}-T,T)$) can also be understood as the functional calculus of $T$ with $t \mapsto f(t,1-t)$ (resp.\ $f(1-t,t)$); see the proofs of Proposition \ref{P2} and Theorem \ref{T4}.  

\medskip
(ii) $\Rightarrow$ (i), (ii') $\Rightarrow$ (i): By symmetry, we will prove only the former. Denote by $R, R'$ the $R$-operators associated with the pairs $(A,B), (V^* AV, V^*BV)$, respectively. 

Observe that $\Vert (V^*AV+V^*B V)^{1/2} x\Vert_\mathcal{K} = \Vert (A+B)^{1/2} Vx\Vert_\mathcal{H}$ for any $x \in \mathcal{K}$. Hence, we have an isometry $U : \mathcal{K}_1 \to \mathcal{H}_1$ sending $(V^*AV+V^*B V)^{1/2} x$ to $(A+B)^{1/2} Vx$ with arbitrary $x \in \mathcal{K}$, where $\mathcal{K} = \mathcal{K}_0\oplus\mathcal{K}_1$ denotes the decomposition associated with the pair $(V^* AV, V^*BV)$. This implies that $U(V^*AV+V^*BV)^{1/2} = (A+B)^{1/2}V$. 

For any $x, y \in \mathcal{K}$ we have 
\begin{align*} 
&(R'(V^*AV+V^*B V)^{1/2} x\,|\,(V^*AV+V^*B V)^{1/2}y)_{\mathcal{K}_1} \\
&= 
((V^*AV)^{1/2}x\,|\,(V^*AV)^{1/2}y)_\mathcal{K} 
= 
(A^{1/2}Vx\,|\,A^{1/2}Vy)_{\mathcal{H}} \\
&= 
(R(A+B)^{1/2}Vx\,|\,(A+B)^{1/2}Vy)_{\mathcal{H}_1} \\
&= 
(U^*RU(V^*AV+V^*B V)^{1/2} x\,|\,(V^*AV+V^*B V)^{1/2}Vy)_{\mathcal{K}_1}, 
\end{align*}
and hence $R' = U^* RU$ and $S'=I_{\mathcal{K}_1}-R' = U^*(I_{\mathcal{H}_1}-R)U = U^* SU$ (since $U^*U=I_{\mathcal{K}_1}$). By item (ii), it follows that 
\[
f(R',S')=g(R') = g(U^* R U) \leq U^* g(R)U = U^* f(R,S)U 
\]
with $g(t) := f(t,1-t)$. This inequality and $U(V^*AV+V^*BV)^{1/2} = (A+B)^{1/2}V$ imply item (i).  

\medskip
(i) $\Rightarrow$ (iii), (iii'): Trivial, because $f(T,I_\mathcal{H})$ (or $f(I_\mathcal{H},T)$) can also be understood as the functional calculus of $T$ with $t \mapsto f(t,1)$ (resp.\ $f(1,t)$) as in (i) $\Rightarrow$ (ii), (ii'). 

\medskip
(iii) $\Rightarrow$ (ii), (iii') $\Rightarrow$ (ii'): By symmetry, we will prove only the former. Assume item (iii), that is, $\psi(t) := f(t,1)$ is operator convex on $[0,\infty)$. For each $0<\delta<1$ we set $c_\delta:=(1-\delta)/\delta$ and define $g_\delta(r,s):=((r+s)/c_\delta)\psi(c_\delta r/(r+s))$ with $g_\delta(0,0)=0$. Then $g_\delta(t,c_\delta-t)=\psi(t)$ for $0\leq t \leq c_\delta$ and $g_\delta(t,1-t)=\psi(c_\delta t)/c_\delta$ is operator convex on $0\leq t\leq 1$. By what we have shown above (i.e., (i) $\Leftrightarrow$ (ii)), we observe that $(A,B) \mapsto g_\delta(A,B)$ satisfies the property of item (i). 

Observe that $f(t,1-t) = g_\delta(t,c_\delta(1-t)-t)$ for every $0\leq t\leq 1-\delta$. Choose an arbitrary $T$ with $\sigma(T) \subseteq [0,1-\delta]$ and an arbitrary isometry $V$. Then $\sigma(V^* TV) \subseteq [0,1-\delta]$ and 
\[
g_\delta(V^*TV,c_\delta(I_\mathcal{H}-V^*TV)-V^* TV) = 
g_\delta(V^*TV,V^*(c_\delta(I_\mathcal{H}-T)-T)V)
\leq 
V^* g_\delta(T,c_\delta(I_\mathcal{H}-T)-T)V
\]
by the property of item (i). Consequently, $t \mapsto f(t,1-t)$ must be operator convex on $[0,1-\delta]$. Since $0<\delta<1$ is arbitrary, it follows, by the continuity of $f(r,s)$, that item (ii) holds on the whole $[0,1]$.
\end{proof} 

\begin{remark}\label{R10} If the closure of the range of $V$ contains $\mathcal{H}_1$, then the isometry $U$ in the proof of {\rm(ii)} $\Rightarrow$ {\rm(i)} is actually a unitary transform, and hence $f(R',S') = U^*f(R,S)U$. This explains that the `operator homogeneity' $f(V^* AV,V^* BV) = V^*f(A,B)V$ holds under this assumption on $V$ without any extra ones {\rm(}cf.\ \cite[Theorem 3]{Fujii:MathJapon88} which dealt with only operator means{\rm)}. This remark explains that Pusz--Woronowicz's functional calculus is a natural noncommutative one, and makes an abstract approach to it possible. This aspect will be discussed elsewhere in a more general setup allowing unbounded functions.  
\end{remark}

The proof of Theorem \ref{T9} clearly gives the next variant. In fact, this variant is easier to prove now. 

\begin{remark}\label{R11} For a homogeneous, real-valued $f \in \mathscr{B}_\mathrm{locb}((0,\infty)^2)$ the following are equivalent{\rm:}
\begin{itemize}
\item[(i)] $f(V^*AV,V^*BV) \leq V^* f(A,B)V$ holds for any pair $A,B$ of positive invertible bounded operators on a Hilbert space $\mathcal{H}$ and any isometry\,{\rm(!)} $V$ from another Hilbert space $\mathcal{K}$ to $\mathcal{H}$. 
\item[(ii)] $t \in (0,1) \mapsto f(t,1-t) \in \mathbb{R}$ is operator convex. 
\item[(ii')] $t \in (0,1) \mapsto f(1-t,t) \in \mathbb{R}$ is operator convex. 
\item[(iii)] $t \in (0,\infty) \mapsto f(t,1) \in \mathbb{R}$ is operator convex. 
\item[(iii')] $t \in (0,\infty) \mapsto f(1,t) \in \mathbb{R}$ is operator convex. 
\end{itemize}
\end{remark}

The facts that we have explained so far say that if one starts with a homogeneous real-valued $f \in \mathscr{B}_\mathrm{locb}([0,\infty)^2)$ satisfying the equivalent conditions in Theorem \ref{T9}, then $(A,B) \mapsto f(A,B)$ becomes a non-commutative perspective function in the sense of Effros--Hansen \cite[Theorem 2.2]{EffrosHansen:AFA14}. Since Effros--Hansen's notion of non-commutative perspective functions cannot be used to capture important examples like operator relative entropy (due to the choice of their domains), a more important observation here is the following: One may think that Pusz--Woronowicz's work \cite{PuszWoronowicz:LMP78} implicitly contains a proof to a consequence of recent works \cite{Effros:NASUSA09,EbadianNikoufarGordji:PNAS11} modulo Theorem \ref{T4}. Remark \ref{R11} says, without appealing to those works, that if one starts with an operator convex function $g(t)$ on $(0,\infty)$ (not $[0,+\infty)$), then $(A,B) \mapsto f(A,B)$ with $f(r,s) := sg(r/s)$ on $(0,\infty)^2$ defined on the pairs of positive invertible operators enjoys the joint operator convexity, and the joint operator convexity characterizes the operator convexity of $g(t)$. Here we recall that a (widely accepted) definition of operator perspective $P_g(A,B)$ associated with an operator convex function $g(t)$ on $(0,+\infty)$ is $P_g(A,B) = B^{1/2}g(B^{-1/2}AB^{-1/2})B^{1/2}$ for positive \emph{invertible} operators $A,B$ on a Hilbert space, and $P_g(A,B) = f(A,B)$ with $f(r,s) := sg(r/s)$ holds thanks to Theorem \ref{T4}. Note that the notion of operator perspectives has not yet been established well for general positive (not necessarily invertible) operators on Hilbert spaces. 

\medskip
We point out that Pusz--Woronowicz proved, as a corollary of their WYDL type theorem, that \emph{the operator convexity {\rm(}or concavity{\rm)} is preseved under linear fractional transformations}, which seems not well known at least explicitly. This is out of scope of the purpose here. See \cite[Corollary 2]{PuszWoronowicz:LMP78} for details. 

\subsection{Quantum information theoretic quantities} 

Pusz--Woronowicz's functional calculus seems useful for the study of quantum information theoretic quantities, but the class $\mathscr{B}_\mathrm{locb}([0,\infty)^2)$ is too small to apply it as already pointed out by Pusz--Woronowicz \cite{PuszWoronowicz:LMP78}; in fact, the important example $f(r,s) = r\log(r/s)$ takes $+\infty$ on the line of $s=0$. However, one can still consider $f(A,B)$ for such a function as a kind of `generalized positive operator' affiliated with $B(\mathcal{H})$ in the sense of Haagerup \cite[\S1]{Haagerup:JFA79} that fits the notion of `generalized quadratic forms' in the sense of \cite{PuszWoronowicz:LMP78}. Such a delicate study needs many extra preparations; we will give here only a bit heuristic explanation about it in the finite-dimensional setting for the reader's convenience. For quantum information quantities, we refer the reader to \cite{HiaiMosonyi:RMP17}. 

Let $A, B$ be positive operators on a finite-dimensional Hilbert space $\mathcal{H}$. Let $\mathcal{H}=\mathcal{H}_0\oplus\mathcal{H}_1$ and $R, S$ be associated with the pair $(A,B)$ as in \S3. Let $R=\sum_{\lambda \in \sigma(R)} \lambda\,P_\lambda$ and $S=\sum_{\mu \in \sigma(S)} \mu\,Q_\mu$ be the spectral decomposition. Then, we have
\[
f(A,B) = \sum_{\lambda\in\sigma(R)} f(\lambda,1-\lambda)\,(A+B)^{1/2}P_\lambda (A+B)^{1/2} = \sum_{\mu\in\sigma(S)} f(1-\mu,\mu)\,(A+B)^{1/2}Q_\mu (A+B)^{1/2},
\]
\emph{but we allow some $f(\lambda,1-\lambda)$ or $f(1-\mu,\mu)$ to be $+\infty$}. This is exactly the same formula as \cite[Theorem 4.2]{Hiai:JMathPhys19}. (We thank Hiai for letting us be aware of it.) In particular, with $h(r,s) := r\log(r/s), f_\alpha(r,s) := r^{1-\alpha} s^\alpha$ ($0 < \alpha \leq 2$), we observe that $\mathrm{Tr}(h(A,B))=+\infty$ if and only if $0 \in \sigma(S)$, that is, $\mathrm{Ker}(B)\neq\mathrm{Ker}(A+B)$, and moreover, that $\mathrm{Tr}(f_\alpha(A,B))=+\infty$ if and only if $\alpha>1$ and $0 \in \sigma(R)$, that is, $\mathrm{Ker}(A)\neq\mathrm{Ker}(A+B)$. On the other hand, Theorem \ref{T4}(1) shows that  
\begin{align*}
\mathrm{Tr}(h(A,B))&=\mathrm{Tr}(A\log(A^{1/2}B^{-1}A^{1/2})) 
= \mathrm{Tr}(B^{1/2}AB^{-1/2}\log(B^{-1/2}AB^{-1/2})), \\
\mathrm{Tr}(f_\alpha(A,B))&=\mathrm{Tr}(B(B^{-1/2}AB^{-1/2})^{1-\alpha}) 
= \mathrm{Tr}(A(A^{-1/2}BA^{-1/2})^\alpha)\ (= \mathrm{Tr}(A\,\sharp_\alpha\,B)\ \text{if $\alpha <1$}),
\end{align*}
when both $A, B$ are invertible. 

The operator $R$ is sufficiently explicit, that is, $R=(A+B)^{-1/2}A(A+B)^{-1/2}$ holds, but $(A+B)^{-1/2}$ should be understood as `the partial inverse' on $\mathrm{Ker}(A+B)^\perp$ in the finite-dimensional setting ({\it n.b.}, this understanding is essentially valid even in the infinite-dimensional setting; see the proof of Lemma \ref{L5}). 

Pusz--Woronowicz gave a general method of obtaining a variational expression of $f(A,B)$ in terms of positive forms. In fact, they gave explicit formulas in the cases of $h(r,s)$ and $f_\alpha(r,s)$ ($0 < \alpha < 1$) in \cite[\S2]{PuszWoronowicz:LMP78}. The method is essentially based on the well-known variational expression of the parallel sum $A:B$ (see e.g.\ \cite[Lemma 3.1.5]{Hiai:LectNotes10}), which is nothing but \cite[Eq(1.2)]{PuszWoronowicz:RMP75}. We recommend the reader to consult \cite[Appendix]{PuszWoronowicz:RMP75},\cite[\S2]{PuszWoronowicz:LMP78} for the details as well as \cite[\S4]{Donald:CMP86} due to Donald as a detailed exposiosion in the case of $h(r,s) = r\log(r/s)$. 

\section*{Acknowledgements} 
The second author thanks Shigeru Yamagami and Fumio Hiai for fruitful conversations. In fact, a casual conversation with Yamagami a few years ago led him to take a serious look at Pusz--Woronowicz's works and to suggest the first author to study them together after her reading of \cite{Hiai:LectNotes10}. He also benefited from a conversation with Hiai at a conference in Dec.\ 2019. 

Finally, both the authors thank Fumio Hiai for several comments to this note, and also thank Mitsuru Uchiyama and the referee for pointing out several typos and for giving comments on the presentation.

\end{document}